\documentclass[11pt,reqno]{amsart}

\usepackage{amssymb}
\usepackage{amscd}
\usepackage{amsfonts}
\usepackage{mathrsfs}
\usepackage{setspace}
\usepackage{version}
\usepackage{mathtools}
\usepackage{enumerate}
\usepackage[english]{babel}
\usepackage{xcolor}
\usepackage{graphicx}

\newtheorem{theorem}{Theorem}[section]
\newtheorem{lemma}[theorem]{Lemma}
\newtheorem{proposition}[theorem]{Proposition}

\theoremstyle{definition}
\newtheorem{definition}[theorem]{Definition}
\newtheorem{assumption}[theorem]{Assumption}
\newtheorem{example}[theorem]{Example}
\newtheorem{examples}[theorem]{Examples}

\theoremstyle{remark}
\newtheorem{remark}[theorem]{Remark}
\numberwithin{equation}{section}


\newcommand{\field}[1]{\mathbb{#1}}

\newcommand{\R}{\field{R}}
\newcommand{\N}{\field{N}}







\textheight 22.5truecm \textwidth 14.5truecm
\setlength{\oddsidemargin}{0.35in}\setlength{\evensidemargin}{0.35in}
\setlength{\topmargin}{-.5cm}

\begin{document}

	\title[]{On the representation of weakly maxitive monetary risk measures and their rate functions}
	
%
	
		\author{Jos\'e M.~Zapata}
	\address{Centro Universitario de la Defensa. Universidad Politécnica de Cartagena. 
c/ Coronel López Peña S/N, Santiago de La Ribera, 30720, Murcia, Spain}
	\email{jose.zapata@cud.upct.es}

		\date{\today}

	\thanks{I would like to thank Michael Kupper and Henri Comman for valuable comments, and an anonymous reviewer for his suggestions, which have helped to improve the presentation of the article.}
	
	\subjclass[2010]{}

	\begin{abstract} 
	The present paper provides a representation result for monetary risk measures (i.e., monotone translation invariant functionals) satisfying a weak maxitivity property.   
	This result can be understood as a functional analytic generalization of G\"{a}rtner-Ellis  large deviations theorem.   
	In contrast to the classical G\"{a}rtner-Ellis theorem, the rate function is computed on an arbitrary set of continuous real-valued functions rather than the dual space.  
  		As an application of the main result, we establish a large deviation result for sequences of sublinear expectations on regular Hausdorff topological spaces. 

		\smallskip
		\noindent \emph{Key words:} maxitive monetary risk measure, large deviations, rate function, Laplace principle.
		
		\smallskip
		\noindent \emph{AMS Subject Classification:} 46N30, 60F10, 91B05.
		
	\end{abstract}

	\maketitle
	
	\setcounter{tocdepth}{1}

\section{Introduction}
 The theory of large deviations studies the asymptotic tail behaviour of sequences of random variables.  
 The earliest developments of this theory arose in the context of ruin theory in actuarial science \cite{cramer,cramer2}, while Varadhan and Donsker \cite{donsker1,varadhan1966asymptotic} systematically developed the modern framework of this field.   
 Next, we recall the basic principles of large deviations theory; we refer to the excellent monograph~\cite{dembo} for further details and historical background. 
 Let $E$ be a regular Hausdorff topological space and a sequence $(X_n)_{n\in\N}$ of $E$-valued random variables defined in a common probability space $(\Omega,\mathcal{F},\mathbb{P})$. 
 The sequence $(X_n)_{n\in\N}$ is said to satisfy the \emph{large deviation principle} (LDP) with rate function $I\colon E\to[0,\infty]$ if
 \[
-\underset{x\in {\rm int}(A)}\inf I(x)\le \liminf_{n\to\infty}\frac{1}{n}\log\mathbb{P}(X_n\in A)\\ 
\le \limsup_{n\to\infty}\frac{1}{n}\log\mathbb{P}(X_n\in A) \le -\underset{x\in {\rm cl}(A)}\inf I(x)
 \]
 for all Borel set $A\subset E$.\footnote{We denote by ${\rm int}(A)$ and ${\rm cl}(A)$ the topological interior and closure of $A\subset E$, respectively.} 
 The Varadhan's integral lemma asserts that a sequence $(X_n)_{n\in\N}$ that satisfies the LDP with rate function $I(\cdot)$ also satisfies the \emph{Laplace principle} (LP) with rate function $I(\cdot)$, that is, 
 \[
 \psi(f)=\underset{x\in E}\sup\{f(x)-I(x)\}
 \]
 for all $f\in C_b(E)$.\footnote{We denote by $C_b(E)$ the set of all bounded continuous real-valued functions on $E$.} 
 Here, we denote by $\psi(f)=\lim_{n\to\infty}\tfrac{1}{n}\log\mathbb{E}_{\mathbb{P}}[e^{n f(X_n)}]$ the asymptotic entropy of $f$.\footnote{The limit in $\psi(f)$ exists for all $f\in C_b(E)$ if $(X_n)$ satisfies the LDP.}   
The converse of the Varadhan's integral lemma also holds true under additional regularity conditions; Bryc \cite{bryc} established this statement assuming that $I(\cdot)$ has compact sublevel sets,  Comman~\cite{comman} proved the same assuming that $E$ is normal. 
 In addition, Bryc's theorem states that both the LDP and LP hold with the rate function $I(x)=\sup_{f\in C_b(E)}\{f(x)-\psi(f)\}$ if the sequence $(X_n)_{n\in\N}$ is exponentially tight\footnote{I.e., for all positive number $M$ there exists $K\subset E$ compact such that $\limsup_{n\to\infty}\frac{1}{n}\log\mathbb{P}(X_n\in K^c) \le -M$.}.   
 The functional $\psi\colon C_b(E)\to\R$ has some properties which are crucial in this theory. 
 First, $\psi$ is a monetary risk measure, that is, it is monotone (i.e., $\psi(f)\le\psi(g)$ whenever $f\le g$) and translation invariant (i.e., $\psi(f+c)=\psi(f)+c$).  
 Second, the asymptotic entropy $\psi$ has the remarkable property of being maxitive (i.e., $\psi(f\vee g)\le \psi(f)\vee\psi(g)$). 
 Actually, the properties of a maxitive monetary risk measure are sufficient to prove generalized versions of all the basic results listed above covering this type of functionals, striping away any probabilistic aspect of the theory.  
 In fact, Bell and Bryc~\cite{bell} introduced and studied a general LP for monetary risk measures\footnote{Bell and Bryc~\cite{bell} uses the term Varadhan functional rather than monetary risk measure. Here, we use the term monetary risk measure to build a bridge towards risk analysis.} on $C_b(E)$ and, more recently, Kupper and Zapata \cite{kupper} have formulated also a general LDP for this kind of functionals and extended to this general setting the Varadhan-Bryc equivalence between LDP and LP,  and the Bryc's theorem.    
  
  As a continuation of the research in \cite{kupper}, the present paper aims to identify new situations where a monetary risk measure satisfies the LP and the LDP and, in particular, how to compute the rate function $I(\cdot)$. 
  While our main result applies to general monetary
risk measures satisfying a weak form of maxitivity, we explain it now for easier readability for the case of the asymptotic entropy $\psi(f)=\lim_{n\to\infty}\tfrac{1}{n}\log\mathbb{E}_{\mathbb{P}}[e^{n f(X_n)}]$, where for simplicity in the exposition we assume that the latter limit exists for all real-valued continuous function $f$.  
 In the following we fix an arbitrarily given non-empty set $\mathcal{H}$ of continuous real-valued functionals on $E$, and consider the corresponding conjugate $\psi_{\mathcal{H}}^\ast$ which is defined by $\psi_{\mathcal{H}}^\ast(x)=\sup_{f\in\mathcal{H}}\{f(x)-\psi(f)\}$. 
 We want to establish sufficient conditions so that $(X_n)_{n\in\N}$ satisfies the LDP with rate function $\psi_{\mathcal{H}}^\ast$.  
 We say that a point $x\in E$ is $\mathcal{H}$-exposed for $\psi_{\mathcal{H}}^\ast$ if there exists a function $f\in\mathcal{H}$ such that
\begin{equation}\label{eq:exposingConf}
 \psi^\ast_{\mathcal{H}}(y) - f(y) >   \psi^\ast_{\mathcal{H}}(x)-f(x) \quad\mbox{ for all }y\neq x.
\end{equation}  
The interpretation of the exposing condition~\eqref{eq:exposingConf} is that, for certain constant $c$, the curve $y \mapsto f(y) + c$ lies strictly below the curve $\psi_{\mathcal{H}}^\ast$ on  $E\setminus\{x\}$, and agrees with it at $x$.  
Denote by $\mathscr{E}$ the set of all $\mathcal{H}$-exposed points of $E$. 
In the special case of the asymptotic entropy, the main result of this paper reads as follows. 
\begin{theorem}\label{thm:linear}
Suppose that the sequence $(X_n)_{n\in\N}$ is exponentially tight.  
Then: 
\begin{itemize}
\item[(i)] For every closed set $C\subset E$, we have the upper bound 
\[
\limsup_{n\to\infty}\frac{1}{n}\log\mathbb{P}(X_n\in C)\le -\underset{y\in C}\inf \psi_{\mathcal{H}}^\ast(x).
\]
\item[(ii)] For every open set $O\subset E$, we have the lower bound
\[
 -\underset{y\in O\cap\mathscr{E}}\inf \psi_{\mathcal{H}}^\ast(x)\le \liminf_{n\to\infty}\frac{1}{n}\log\mathbb{P}(X_n\in O).
\]
\item[(iii)]
If, moreover, 
\begin{equation}\label{eq:richExpPoints}
\underset{x\in O}\inf \psi_{\mathcal{H}}^\ast(x)=\underset{x\in O\cap{\mathscr{E}}}\inf \psi_{\mathcal{H}}^\ast(x)\quad\mbox{for all }O\subset E\mbox{ open,}
\end{equation}
then $(X_n)_{n\in\N}$ satisfies the LDP and LP with rate function $\psi_{\mathcal{H}}^\ast$. 
\end{itemize}
\end{theorem}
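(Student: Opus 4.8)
The plan is to work directly with the asymptotic entropy $\psi(f)=\lim_n\frac1n\log\mathbb E_{\mathbb P}[e^{nf(X_n)}]$ and to imitate the proof of the G\"artner--Ellis theorem, with the prescribed family $\mathcal H$ playing the role of the dual space. (Alternatively one could deduce the statement from a general representation theorem for weakly maxitive monetary risk measures, but the direct route is the most transparent for this special case.) Two standing observations: $\psi_{\mathcal H}^\ast$ is a supremum of the continuous functions $x\mapsto f(x)-\psi(f)$, hence lower semicontinuous, so each $f-\psi_{\mathcal H}^\ast$ is upper semicontinuous; and $\psi_{\mathcal H}^\ast(x)\ge f(x)-\psi(f)$ for every $x\in E$ and $f\in\mathcal H$.

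For (i) I would first prove the weak upper bound $\limsup_n\frac1n\log\mathbb P(X_n\in K)\le-\inf_K\psi_{\mathcal H}^\ast$ for compact $K$: fix $M>0$, for each $x\in K$ choose $f_x\in\mathcal H$ with $f_x(x)-\psi(f_x)\ge\min(\psi_{\mathcal H}^\ast(x),M)-\varepsilon$ and an open neighbourhood $U_x\ni x$ on which $f_x\ge f_x(x)-\varepsilon$, extract a finite subcover $U_{x_1},\dots,U_{x_k}$, and combine the union bound with Markov's inequality $\mathbb P(X_n\in U_{x_i})\le e^{-n(f_{x_i}(x_i)-\varepsilon)}\mathbb E[e^{nf_{x_i}(X_n)}]$; passing to the limit (a finite sum carries its largest exponential rate) gives $\limsup_n\frac1n\log\mathbb P(X_n\in K)\le-\min(\inf_K\psi_{\mathcal H}^\ast,M)+2\varepsilon$, and $\varepsilon\downarrow0$, $M\uparrow\infty$ together with exponential tightness (split $C=(C\cap\Gamma)\cup(C\cap\Gamma^c)$ with $\Gamma$ a suitable compact set) upgrades this to arbitrary closed $C$. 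A level-set discretisation of the range of $f$, applying (i) to each resulting closed set, then yields the Varadhan-type estimate
\[
\limsup_n\tfrac1n\log\mathbb E\big[e^{nf(X_n)}\mathbf 1_{\{X_n\in F\}}\big]\le\sup_{y\in F}\big\{f(y)-\psi_{\mathcal H}^\ast(y)\big\}\qquad(F\subset E\ \text{closed}),
\]
which is the tool behind the lower bound.

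For (ii), fix an open set $O$ and $x\in O\cap\mathscr E$ with exposing function $f\in\mathcal H$. Because $\psi_{\mathcal H}^\ast(x)\ge f(x)-\psi(f)$, it suffices to prove $\liminf_n\frac1n\log\mathbb P(X_n\in O)\ge\psi(f)-f(x)$ and then take the supremum over $x\in O\cap\mathscr E$. Set $B_\delta=O\cap\{y:|f(y)-f(x)|<\delta\}$, an open neighbourhood of $x$; since $e^{-nf(X_n)}\ge e^{-n(f(x)+\delta)}$ on $\{X_n\in B_\delta\}$,
\[
\mathbb P(X_n\in O)\ge\mathbb P(X_n\in B_\delta)\ge e^{-n(f(x)+\delta)}\,\mathbb E\big[e^{nf(X_n)}\mathbf 1_{\{X_n\in B_\delta\}}\big],
\]
and it remains to show $\liminf_n\frac1n\log\mathbb E[e^{nf(X_n)}\mathbf 1_{\{X_n\in B_\delta\}}]\ge\psi(f)$. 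Decomposing $\mathbb E[e^{nf(X_n)}]$ (of exponential rate $\psi(f)$) into this term plus the contribution of the closed set $B_\delta^c$, it is enough to verify that the latter has rate strictly below $\psi(f)$. Here $B_\delta^c$ is bounded away from $x$: using exponential tightness to discard its part in $\Gamma^c$ for $M$ large (negligible by the displayed Varadhan-type estimate) and using that the upper semicontinuous function $f-\psi_{\mathcal H}^\ast$ attains its maximum over the compact set $B_\delta^c\cap\Gamma$ at some $y^\ast\ne x$, the exposing property \eqref{eq:exposingConf} yields $f(y^\ast)-\psi_{\mathcal H}^\ast(y^\ast)<f(x)-\psi_{\mathcal H}^\ast(x)\le\psi(f)$; by the Varadhan-type estimate this upper-bounds $\limsup_n\frac1n\log\mathbb E[e^{nf(X_n)}\mathbf 1_{\{X_n\in B_\delta^c\}}]$. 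Letting $\delta\downarrow0$ concludes (ii).

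Finally, for (iii), under \eqref{eq:richExpPoints} part (ii) reads $\liminf_n\frac1n\log\mathbb P(X_n\in O)\ge-\inf_O\psi_{\mathcal H}^\ast$ for every open $O$, which combined with the closed-set upper bound (i) --- applied to $\mathrm{cl}(A)$ and $\mathrm{int}(A)$ for Borel $A$ --- is exactly the LDP with rate function $\psi_{\mathcal H}^\ast$ (which is lower semicontinuous); the Laplace principle on $C_b(E)$ then follows from the LDP and exponential tightness via Varadhan's integral lemma. The main obstacle is the concentration step inside (ii) --- showing that the $f$-tilted laws put asymptotically full exponential mass on $B_\delta$ --- which is precisely where the exposing property, the upper bound (i) in tilted form, exponential tightness, and the lower semicontinuity of $\psi_{\mathcal H}^\ast$ all have to be marshalled together; secondary technical points, to be dispatched along the way, are the finiteness of $\psi$ on the exposing functions and, when $\mathcal H$ contains unbounded functions, the control of $e^{nf(X_n)}$ off compacta (a G\"artner--Ellis-type steepness/moment condition).
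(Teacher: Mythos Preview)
Your approach is correct and takes a genuinely different route from the paper.  The paper does not prove Theorem~\ref{thm:linear} directly; it derives it as the specialization of Theorem~\ref{thm:main} (the abstract result for pairs $\underline\phi,\overline\phi$ of weakly maxitive monetary risk measures), whose proof is purely order-theoretic: the upper bound comes from the convex-integral representation (Theorem~\ref{thm:repIII}/\ref{thm:repII}) together with a Bryc-type identity (Proposition~\ref{prop:Bryc}), and the lower bound from a concentration lemma (Proposition~\ref{eq:concentrationExp}) proved via weak maxitivity and the duality bounds of Theorem~\ref{thm:dualityBounds}.  No Markov/Chernoff inequality, no additive decomposition of expectations, no tilting --- the only place the probabilistic structure enters is Lemma~\ref{lem:joinMax}, where the largest-term principle verifies Assumption~\ref{ass:jointMax} for $\underline\psi,\overline\psi$.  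Your argument, by contrast, is the classical G\"artner--Ellis scheme transplanted to a general family $\mathcal H$: Chernoff + finite subcover for (i), and the tilted-expectation concentration step for (ii).  What the paper's abstraction buys is immediate transfer to sublinear expectations and to arbitrary weakly maxitive functionals (Section~\ref{sec:LDsublinearExp}); what your route buys is transparency and self-containment for the asymptotic-entropy case, without building the convex-integral machinery.

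One caution on your treatment of (ii).  In the decomposition of $\mathbb E[e^{nf(X_n)}\mathbf 1_{B_\delta^c}]$, the compact piece $B_\delta^c\cap\Gamma$ is handled cleanly by your Varadhan-type estimate plus the exposing inequality, but the parenthetical ``negligible by the displayed Varadhan-type estimate'' for the $\Gamma^c$ piece is not quite right as stated: $\Gamma^c$ is open, and $f$ may be unbounded there, so neither your closed-set estimate nor (i) applies directly.  The honest argument here is H\"older (or the bound $e^{nf}\le e^{nL}+e^{nf}\mathbf 1_{\{f>L\}}$ combined with exponential tightness and the tail control $\lim_{L\to\infty}\limsup_n\tfrac1n\log\mathbb E[e^{nf(X_n)}\mathbf 1_{\{f(X_n)>L\}}]=-\infty$), and this is exactly where the moment condition $\psi(tf)<\infty$ for some $t>1$ is used essentially.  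In the paper this step is isolated as Lemma~\ref{lem:tail}, and in the full Theorem~\ref{thm:main} the condition $f\in B_{\overline\phi}(E)$ is built into the definition of $\mathscr E$ rather than left as a side remark; you flag it at the end, but it belongs in the body of the argument.
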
 
Of great importance,   
 G\"{a}rtner-Ellis theorem provides the LDP for sequences  of random variables with values on a topological vector space by testing the rate function on the dual space.  
This well-known result turns out to be a particular instance of Theorem \ref{thm:linear} above for the special case when $E$ is a topological vector space and the particular choice $\mathcal{H}=E^\ast$; cf.~\cite[Theorem 4.5.20]{dembo}.  
One of the novelties of Theorem \ref{thm:linear} is that it allows for different options for the testing set $\mathcal{H}$, instead of limiting ourselves to the dual space.    
Moreover, it is well-known that G\"{a}rtner-Ellis theorem does not cover all cases in which a LDP exists; there are examples for which the LDP holds, but it does not follow from this basic result; see~\cite[Remarks(d),~p.~45]{dembo} and \cite{chen0}.  
In contrast, Theorem \ref{thm:linear} above  allows for arbitrary choices of $\mathcal{H}$, covering  situations where G\"{a}rtner-Ellis theorem fails. 
This is illustrated in Example \ref{ex:classical} below, where we have a situation where a LDP is not covered by  G\"{a}rtner-Ellis theorem, but it is captured by choosing a family $\mathcal{H}$ of inverted v-shaped functions.     

Whereas we have stated above our main result for the particular functional $\psi(f)=\lim_{n\to\infty}\tfrac{1}{n}\log\mathbb{E}_{\mathbb{P}}[e^{n f(X_n)}]$, it applies to very general functionals  allowing to cover some non-standard setups as those  in~\cite{backhoff,chen,eckstein,follmerII,lacker,tan,yan}.    
 For instance, in situations with model uncertainty, one may be interested in considering a set $\mathcal{P}$ of probability measures rather than a single probability measure $\mathbb{P}$; see \cite{chen,tan} and references therein.  
 This situation is covered by our main result by considering the robust asymptotic entropy  $\psi_{\mathcal{P}}(f)=\limsup_{n\to\infty}\tfrac{1}{n}\log\sup_{Q\in \mathcal{P}}\mathbb{E}_Q[e^{n f(X_n)}]$. In particular, we extend to infinite dimensional spaces the version of  G\"{a}rtner-Ellis theorem for sequences of sublinear expectations on $\R^d$ proven in~\cite{tan} with the advantage that the rate function is now tested on  arbitrary sets of continuous functions.   
 
 The present approach fully relies on topological and order properties and, in particular, it is not needed an underlying probability space. 
 We emphasize that the existing proofs of G\"{a}rtner-Ellis theorem are based on probability concepts such as the Radon-Nykodym derivative which are not needed here; cf.~\cite[Theorem 4.5.20]{dembo}.  
 In line with \cite{kupper2,puhalskii}, the machinery is taken from maxitive integration. 
 More specifically, as in \cite{kupper2} we use the convex integral introduced by Cattaneo~\cite{cattaneo}, which is conceptually related to the idempotent integral in tropical mathematics~\cite{maslov} and can be obtained as a transformation of the Shilkret integral~\cite{shilkret}. 
 In particular, we rely on the duality bounds for convex integrals  and the  convex integral representation of weakly maxitive monetary risk measures provided in \cite{kupper2}.   

The paper is organized as follows. 
In Section \ref{sec:preliminaries} we give some preliminaries on maxitive integration. 
In Section \ref{sec:representation} we focus on the integral representation of weakly maxitive monetary risk measures.  
In Section \ref{sec:main} we state and prove the main result of this paper. 
Finally, in Section \ref{sec:LDsublinearExp} we apply the main result to obtain a large deviations result  for sequences of sublinear expectations on regular Hausdorff topological spaces.

\section{Preliminaries on maxitive integration}\label{sec:preliminaries}
Throughout this paper let $E$ be a regular Hausdorff  topological space with Borel $\sigma$-algebra $\mathcal{B}(E)$.   
We always make the convention that $-\infty \cdot 0=0$. 
Then, given a function $f\colon E \to\R\cup \{-\infty\}$, the function $f 1_A - \infty 1_{A^c}$ takes the same values as $f$ on $A\subset E$ and the value $-\infty$ on $A^c$.

A set function $J\colon \mathcal{B}(E)\to [-\infty,0]$ is said to be a \emph{concentration} if:
\begin{itemize}
\item[(a1)] $J_\emptyset=-\infty$, $J_E=0$,
\item[(a2)] $J_A\le J_B$ whenever $A\subset B$.
\end{itemize}
We say that $J$ is \emph{maxitive} if:
\begin{itemize}
\item[(a3)] $J_{A\cup B}\le J_A\vee J_B$.
\end{itemize}
Denote by $B(E)$ the set of all Borel measurable functions $f\colon E\to \mathbb{R}\cup\{-\infty\}$. 
We define the \emph{convex integral} of $f\in B(E)$ with respect to the concentration $J$ as
\footnote{It is not difficult to show that $\phi_J(f)=\sup_{c\in\mathbb{R}}\left\{c + J_{\{f> c\}}\right\}=\sup_{c\in\mathbb{R}}\left\{c + J_{\{f\ge  c\}}\right\}$, i.e.~we obtain an equivalent definition of $\phi_J(f)$ if the strict inequality in \eqref{eq:maxitiveInt} is replaced by a non-strict inequality.}
\begin{equation}\label{eq:maxitiveInt}
\phi_J(f)=\underset{c\in\mathbb{R}}\sup\left\{c + J_{\{f> c\}}\right\}.
\end{equation}  
The following properties (b1)--(b4) below are provided in \cite{cattaneo} and their proofs easily adapt to the present setting.  
 The proof of (b5) can be found in~\cite[Lemma 4.1]{kupper2}.    
\begin{itemize}
\item[(b1)] $\phi_J(-\infty 1_{A^c})=J_A$ for all $A\in\mathcal{B}(E)$, 
\item[(b2)] $\phi_J(0)=0$,
\item[(b3)] $\phi_J(f+c)=\phi_J(f)+c$ for all constant $c\in \mathbb{R}$,
\item[(b4)] $\phi_J(f)\le \phi_J(g)$ whenever $f\le g$,
\item[(b5)] $\underset{n\to\infty}\lim \phi_J(f \wedge n)=\phi_J(f)$ and  $\underset{n\to\infty}\lim \phi_J(f \vee -n)=\phi_J(f)$.
\end{itemize}
If $J$ is maxitive, then the following hold:
\begin{itemize}
\item[(b6)] $\phi_J$ is maxitive, i.e. $\phi_J(f\vee g)\le \phi_J(f)\vee \phi_J(g)$, 
\item[(b7)] $\phi_J$ is convex, i.e. $\phi_J(\lambda f + (1-\lambda) g)\le \lambda \phi_J(f) + (1-\lambda) \phi_J(g)$ for all $0\le \lambda \le 1$.
\end{itemize}
The proofs of (b6) and (b7) can be found in \cite[Corollary 5]{cattaneo} and \cite[Theorem 7]{cattaneo}, respectively. 
Denote by ${L}(E)$ the set of all lower semicontinuous functions $f\colon E\to \mathbb{R}\cup\{-\infty\}$, and by ${U}(E)$ the set of all upper semicontinuous functions $f\colon E\to \mathbb{R}\cup\{-\infty\}$. 
The following duality bounds were proved in \cite[Theorem 3.4]{kupper2}.\footnote{\cite[Theorem 3.4]{kupper2} deals with functions which are increasing with respect to a given preorder. 
To apply \cite[Theorem 3.4]{kupper2} here, we consider the trivial preoreder, i.e. $x\le y$ whenever $x=y$.} 
\begin{theorem}\label{thm:dualityBounds}
Let $J$ be a concentration, and $I\colon E\to [0,\infty]$ a function. 
Then, the following equivalences hold. 
First, 
\begin{equation}\label{eq:LowerLDP}
-\underset{x\in O}\inf I(x)\le J_O\quad\mbox{ for all open set }O\subset E
\end{equation}
if and only if
\begin{equation}\label{eq:LowerLP}
\phi_J(f)\ge \underset{x\in E}\sup\{f(x)-I(x)\}\quad\mbox{ for all }f\in  {L}(E). 
\end{equation}
Second, 
\begin{equation}\label{eq:UpperLDP}
J_C\le -\underset{x\in C}\inf I(x)\quad\mbox{ for all closed set }C\subset E
\end{equation}
if and only if
\begin{equation}\label{eq:UpperLP}
\phi_J(f)\le \underset{x\in E}\sup\{f(x)-I(x)\}\quad\mbox{ for all }f\in  {U}(E). 
\end{equation}
\end{theorem}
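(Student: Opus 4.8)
The plan is to treat the two equivalences in parallel, since each splits into an \emph{easy} implication (from the integral inequality to the set-function inequality) and a \emph{harder} one (from the set-function inequality to the integral inequality). The organizing idea I would keep in mind is the pairing between the class of test functions and the topological type of the superlevel sets appearing in the definition of $\phi_J$: lower semicontinuous functions have open superlevel sets $\{f>c\}$, which I would pair with the open-set lower bound \eqref{eq:LowerLDP}, whereas upper semicontinuous functions have closed superlevel sets $\{f\ge c\}$, which pair with the closed-set upper bound \eqref{eq:UpperLDP}.

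For the two easy directions I would simply feed the special indicator-type functions into the integral inequalities. To deduce \eqref{eq:LowerLDP} from \eqref{eq:LowerLP}, fix an open $O$ and take $g=-\infty 1_{O^c}$ (equal to $0$ on $O$ and $-\infty$ elsewhere); I would first check that $g$ is lower semicontinuous by computing its superlevel sets (they are $O$ or $\emptyset$), then invoke property~(b1) to get $\phi_J(g)=J_O$ together with the elementary identity $\sup_{x\in E}\{g(x)-I(x)\}=-\inf_{x\in O}I(x)$, and conclude. The implication \eqref{eq:UpperLP}$\Rightarrow$\eqref{eq:UpperLDP} is the same argument with a closed set $C$ and $h=-\infty 1_{C^c}$, which is now upper semicontinuous because its superlevel sets are $C$ and $\emptyset$.

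For \eqref{eq:LowerLDP}$\Rightarrow$\eqref{eq:LowerLP} I would fix $f\in L(E)$ and a point $x_0$ (the only interesting case being $f(x_0)\in\R$ and $I(x_0)<\infty$), and exploit that for every $c<f(x_0)$ the set $\{f>c\}$ is open and contains $x_0$. Then \eqref{eq:LowerLDP} gives $J_{\{f>c\}}\ge-I(x_0)$, so $c+J_{\{f>c\}}\ge c-I(x_0)$; letting $c\uparrow f(x_0)$ in the definition of $\phi_J$ and then taking the supremum over $x_0$ should produce the bound. Dually, for \eqref{eq:UpperLDP}$\Rightarrow$\eqref{eq:UpperLP} I would switch to the non-strict representation $\phi_J(f)=\sup_{c\in\R}\{c+J_{\{f\ge c\}}\}$ recorded in the footnote to~\eqref{eq:maxitiveInt}; for $f\in U(E)$ each set $\{f\ge c\}$ is closed, so \eqref{eq:UpperLDP} bounds $c+J_{\{f\ge c\}}$ by $\sup_{y:\,f(y)\ge c}\{c-I(y)\}\le\sup_{x\in E}\{f(x)-I(x)\}$, and taking the supremum over $c$ finishes it.

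I expect the only real obstacle to be the careful bookkeeping in these last two steps: making sure that the semicontinuity hypothesis delivers exactly the open (respectively closed) level sets required, and that one uses the strict form $\{f>c\}$ in the lower-bound case but the non-strict form $\{f\ge c\}$ in the upper-bound case, which is precisely why the footnote identity is invoked there. Once this alignment is fixed, each estimate reduces to monotonicity~(a2) of $J$ and the supremum defining $\phi_J$, with the degenerate cases (an empty $O$ or empty level set, $I(x_0)=\infty$, or $f$ attaining $-\infty$) absorbed by the conventions $J_\emptyset=-\infty$ and $-\infty\cdot 0=0$.
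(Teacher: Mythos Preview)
The paper does not actually prove this theorem: it is quoted from \cite[Theorem~3.4]{kupper2} and stated without proof, so there is no in-paper argument to compare against. Your proposal is nevertheless a correct and standard proof. The key pairing you identify---open strict superlevel sets $\{f>c\}$ for $f\in L(E)$ versus closed non-strict superlevel sets $\{f\ge c\}$ for $f\in U(E)$, the latter requiring the footnote identity---is exactly what is needed, and the ``easy'' directions via the indicator-type functions $-\infty\,1_{A^c}$ together with property~(b1) are handled correctly. The edge cases ($f(x_0)=-\infty$, $I(x_0)=\infty$, empty level sets) are indeed absorbed by the conventions as you note.
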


The minimal rate function $I_{\min}\colon E\to [0,\infty]$ associated with a concentration $J$ is defined as
\begin{equation}\label{eq:minRate}
I_{\min}(x):=\underset{f\in {L}(E)}\sup\{f(x)-\phi_J(f)\}.
\end{equation}

\begin{proposition}\label{prop:lowerBounds}
Let $J$ be a concentration and suppose that $I_{\min}$ is defined as in \eqref{eq:minRate}. 
Then, $I_{\min}$ is the smallest mapping $I\colon E\to[0,\infty]$ which satisfies the equivalent inequalities \eqref{eq:LowerLDP} and \eqref{eq:LowerLP}. 
\end{proposition}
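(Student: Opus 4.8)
The plan is to reduce everything to inequality \eqref{eq:LowerLP}, since Theorem \ref{thm:dualityBounds} already gives its equivalence with \eqref{eq:LowerLDP}, and then to verify three things in turn: that $I_{\min}$ as defined in \eqref{eq:minRate} actually takes values in $[0,\infty]$, that $I_{\min}$ satisfies \eqref{eq:LowerLP}, and that $I_{\min}$ is pointwise dominated by any $I\colon E\to[0,\infty]$ satisfying \eqref{eq:LowerLP}. Together these say precisely that $I_{\min}$ is the smallest such mapping.

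For the first point I would use that the constant function $0$ belongs to $L(E)$ and that $\phi_J(0)=0$ by property (b2); taking $f=0$ as a competitor in the supremum \eqref{eq:minRate} gives $I_{\min}(x)\ge 0-\phi_J(0)=0$ for every $x\in E$, so $I_{\min}$ is a well-defined $[0,\infty]$-valued map. For the second point, fix $f\in L(E)$; for each $x\in E$ this same $f$ is one of the functions over which the supremum defining $I_{\min}(x)$ is taken, so $I_{\min}(x)\ge f(x)-\phi_J(f)$, i.e.\ $f(x)-I_{\min}(x)\le\phi_J(f)$, and taking the supremum over $x\in E$ yields exactly \eqref{eq:LowerLP} for $I_{\min}$ (hence, by Theorem \ref{thm:dualityBounds}, also \eqref{eq:LowerLDP}). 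For minimality, suppose $I\colon E\to[0,\infty]$ satisfies \eqref{eq:LowerLP}; then for every $f\in L(E)$ and every $x\in E$ one has $\phi_J(f)\ge\sup_{y\in E}\{f(y)-I(y)\}\ge f(x)-I(x)$, hence $I(x)\ge f(x)-\phi_J(f)$, and taking the supremum over $f\in L(E)$ gives $I(x)\ge I_{\min}(x)$.

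I do not expect a genuine obstacle here: the argument is a formal interchange of suprema combined with the equivalence furnished by Theorem \ref{thm:dualityBounds}. The only point requiring a moment's care is the nonnegativity claim $I_{\min}\ge 0$ (to make the statement "$I\colon E\to[0,\infty]$" meaningful), which is handled by the admissibility of the constant function $0$ together with $\phi_J(0)=0$; one should also keep in mind the usual conventions for arithmetic in $[-\infty,\infty]$ so that terms with $f(x)=-\infty$ in \eqref{eq:minRate} cause no ambiguity, which is immediate since such terms are already dominated by the value $0$ coming from $f=0$.
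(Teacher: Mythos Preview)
Your proof is correct and complete. The paper actually states Proposition~\ref{prop:lowerBounds} without proof, so there is no argument to compare against; your three-step verification (nonnegativity via $f=0$ and (b2), the inequality \eqref{eq:LowerLP} by reading $I_{\min}$ as a supremum, and minimality by the dual inequality) is exactly the standard Fenchel-type duality argument one expects here and is presumably what the authors considered routine enough to omit.
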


The minimal rate function has the following representation; see~\cite[Lemma 3.5]{kupper2}.  
\begin{proposition}\label{prop:representationRate}
Let $J$ be a concentration and suppose that $I_{\min}$ is defined as in~\eqref{eq:minRate}. 
For all $x\in E$ it holds
\[
-I_{\min}(x)=\underset{U\in\mathcal{U}_x}\inf J_U,
\]
where $\mathcal{U}_x$ is a base of open neighborhoods of $x\in E$. 
\end{proposition}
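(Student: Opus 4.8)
The plan is to avoid any direct manipulation of the convex integral and instead reduce the statement to the minimality property of $I_{\min}$ recorded in Proposition~\ref{prop:lowerBounds}, together with the monotonicity axiom (a2) for the concentration $J$.

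Introduce the candidate $\tilde I\colon E\to[0,\infty]$ defined by $\tilde I(x):=-\inf_{U\in\mathcal{U}_x}J_U$; this is well defined and $[0,\infty]$-valued because $\mathcal{U}_x$ is a non-empty family of open (hence Borel) sets and $J$ is $[-\infty,0]$-valued. The first step is to verify that $\tilde I$ satisfies inequality~\eqref{eq:LowerLDP}. Let $O\subset E$ be open and $x\in O$. Since $\mathcal{U}_x$ is a \emph{base} of open neighborhoods of $x$, there is some $U\in\mathcal{U}_x$ with $U\subset O$, whence $\inf_{U'\in\mathcal{U}_x}J_{U'}\le J_U\le J_O$ by (a2), i.e. $\tilde I(x)\ge -J_O$. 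Taking the infimum over $x\in O$ gives $-\inf_{x\in O}\tilde I(x)\le J_O$, as required. Since, by Proposition~\ref{prop:lowerBounds}, $I_{\min}$ is the \emph{smallest} map $E\to[0,\infty]$ satisfying~\eqref{eq:LowerLDP}, we conclude $I_{\min}\le\tilde I$ pointwise, that is, $-I_{\min}(x)\ge\inf_{U\in\mathcal{U}_x}J_U$ for every $x\in E$.

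For the reverse inequality, use that $I_{\min}$ itself satisfies~\eqref{eq:LowerLDP} (again by Proposition~\ref{prop:lowerBounds}). Fix $x\in E$ and $U\in\mathcal{U}_x$. Applying~\eqref{eq:LowerLDP} with the open set $O=U$ and using $x\in U$ yields $-I_{\min}(x)\le-\inf_{y\in U}I_{\min}(y)\le J_U$. As $U\in\mathcal{U}_x$ was arbitrary, $-I_{\min}(x)\le\inf_{U\in\mathcal{U}_x}J_U$. Combining the two bounds proves the identity $-I_{\min}(x)=\inf_{U\in\mathcal{U}_x}J_U$.

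I do not anticipate a genuine obstacle. The only point deserving care is the first step: one must use that $\mathcal{U}_x$ is a base (rather than merely some family) of neighborhoods, so that every open $O\ni x$ contains a member of $\mathcal{U}_x$; this also makes it transparent that $\inf_{U\in\mathcal{U}_x}J_U$ does not depend on the chosen base. A more hands-on alternative, if one wishes not to invoke Proposition~\ref{prop:lowerBounds}, is to argue directly from~\eqref{eq:minRate}: testing with the lower semicontinuous function $f=-\infty 1_{U^c}$ and using (b1) gives $-I_{\min}(x)\le J_U$ for each $U\in\mathcal{U}_x$; conversely, for arbitrary $f\in L(E)$ with $f(x)>-\infty$ the lower semicontinuity of $f$ makes $\{f>c\}$ an open neighborhood of $x$ for every $c<f(x)$, so $\phi_J(f)\ge c+J_{\{f>c\}}\ge c+\inf_{U\in\mathcal{U}_x}J_U$, and letting $c\uparrow f(x)$ yields $f(x)-\phi_J(f)\le-\inf_{U\in\mathcal{U}_x}J_U$. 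This second route has the minor drawback of requiring one to fix a convention for the terms with $f(x)=-\infty$, which is why I prefer the argument via Proposition~\ref{prop:lowerBounds}.
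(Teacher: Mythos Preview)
Your proof is correct. The paper itself does not supply a proof of this proposition; it simply refers the reader to \cite[Lemma 3.5]{kupper2}, so there is no in-paper argument to compare against. Your route via the minimality characterization in Proposition~\ref{prop:lowerBounds} is clean and entirely self-contained within the present paper. The alternative direct argument you sketch is also sound; the only step you elided is that the open set $\{f>c\}$ need not itself belong to the base $\mathcal{U}_x$, but since $\mathcal{U}_x$ is a base there is some $U\in\mathcal{U}_x$ with $U\subset\{f>c\}$, whence $J_{\{f>c\}}\ge J_U\ge\inf_{U'\in\mathcal{U}_x}J_{U'}$ by (a2), and the rest goes through.
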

The following notion was introduced in \cite{kupper2}. 
\begin{definition}
A concentration $J$ is said to be \emph{weakly maxitive} if 
\[
J_C\le \vee_{i=1}^N J_{O_i} \mbox{ 
for all $C\subset E$ closed, $O_1,O_2,\ldots,O_N\subset E$ open, $N\in\N$, such that $C\subset\cup_{i=1}^N O_i$.} 
\]
\end{definition}
Notice that every maxitive concentration $J$ is also weakly maxitive.

\begin{definition}\label{def:tightness}
We say that a concentration $J$ is \emph{tight} is for every $n\in\mathbb{N}$ there exists a compact set $K\subset E$ such that $J_{K^c}<-n$.
\end{definition}

The following result was provided in \cite[Theorem 4.1]{kupper} (see~\cite[Remark 4.2]{kupper}) and \cite[Corollary 3.10]{kupper2} under slightly different assumptions. 
For the sake of completeness, we provide a short proof.  
\begin{proposition}\label{prop:Bryc}
Let $J$ be a concentration and suppose that $I_{\min}$ is defined as in \eqref{eq:minRate}. 
If $J$ is tight and weakly maxitive, then $I_{\min}$ satisfies \eqref{eq:LowerLDP}, \eqref{eq:LowerLP}, \eqref{eq:UpperLDP}, and \eqref{eq:UpperLP}. 
\end{proposition}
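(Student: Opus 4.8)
The plan is to prove that the minimal rate function $I_{\min}$ satisfies the two lower estimates \eqref{eq:LowerLDP}--\eqref{eq:LowerLP} and, separately, the two upper estimates \eqref{eq:UpperLDP}--\eqref{eq:UpperLP}. By Theorem \ref{thm:dualityBounds} it suffices in each case to verify just one of the two equivalent formulations, so I would work with whichever is more convenient. For the lower estimates this is immediate: Proposition \ref{prop:lowerBounds} already asserts that $I_{\min}$ satisfies \eqref{eq:LowerLDP} (indeed it is the smallest such function), and \eqref{eq:LowerLP} follows by the equivalence in Theorem \ref{thm:dualityBounds}. So no work is needed there, and tightness and weak maxitivity are not even used for the lower bound. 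The real content is the upper bound.

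For the upper estimate, I would verify \eqref{eq:UpperLDP}: for every closed set $C\subset E$, $J_C\le -\inf_{x\in C}I_{\min}(x)$. First handle the case where $C$ is compact. By Proposition \ref{prop:representationRate}, $-I_{\min}(x)=\inf_{U\in\mathcal{U}_x}J_U$ for a neighborhood base $\mathcal{U}_x$ of $x$, so for each $x\in C$ and each $\ep>0$ one may pick an open $U_x\ni x$ with $J_{U_x}\le -I_{\min}(x)+\ep\le -\inf_{y\in C}I_{\min}(y)+\ep$. The sets $\{U_x\}_{x\in C}$ cover the compact set $C$, so finitely many $U_{x_1},\dots,U_{x_N}$ already cover $C$. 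Now weak maxitivity applies directly: $J_C\le\bigvee_{i=1}^N J_{U_{x_i}}\le -\inf_{y\in C}I_{\min}(y)+\ep$, and letting $\ep\to0$ gives the bound for compact $C$.

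To pass from compact to arbitrary closed $C$, I would use tightness. Fix $n\in\N$ and choose a compact $K\subset E$ with $J_{K^c}<-n$. Then $C\subset (C\cap K)\cup K^c$, where $C\cap K$ is compact and $K^c$ is open. Weak maxitivity gives $J_C\le J_{C\cap K}\vee J_{K^c}$. If $J_C\ge -n$, then necessarily $J_C\le J_{C\cap K}$ (since $J_{K^c}<-n\le J_C$ rules out the other alternative), and the compact case yields $J_C\le J_{C\cap K}\le -\inf_{x\in C\cap K}I_{\min}(x)\le -\inf_{x\in C}I_{\min}(x)$. If instead $J_C<-n$ for every $n$, then $J_C=-\infty$ and the desired inequality holds trivially. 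This establishes \eqref{eq:UpperLDP}, hence \eqref{eq:UpperLP} by Theorem \ref{thm:dualityBounds}.

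The main obstacle—and it is a mild one—is the reduction from closed to compact sets: weak maxitivity is a statement about covers of \emph{closed} sets by \emph{open} sets, so one must set up the cover $C\subset(C\cap K)\cup K^c$ carefully and then argue by the dichotomy on whether $J_C\ge -n$ so that the $K^c$ term can be discarded. Everything else is a routine combination of the compactness argument with Proposition \ref{prop:representationRate} and the duality of Theorem \ref{thm:dualityBounds}.
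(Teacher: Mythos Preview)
Your overall plan is right, and the compact case is handled correctly. The gap is in the passage from compact to general closed sets. You write ``Weak maxitivity gives $J_C\le J_{C\cap K}\vee J_{K^c}$'', but weak maxitivity only applies to covers of a closed set by \emph{open} sets, and $C\cap K$ is closed, not open. You even flag this as the obstacle, yet the argument you give does not resolve it: the dichotomy on whether $J_C\ge -n$ only helps \emph{after} the inequality $J_C\le J_{C\cap K}\vee J_{K^c}$ has been established, and that inequality is precisely what weak maxitivity does not provide.

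The fix is simple and coincides with the paper's argument: do not separate the two steps. Given $\ep>0$, choose $K$ compact with $-J_{K^c}\ge \ep^{-1}$, then run your compact argument on $C\cap K$ to produce open sets $U_1,\dots,U_N$ covering $C\cap K$ with $-J_{U_i}\ge (I_{\min}(x_i)-\ep)\wedge\ep^{-1}$. Setting $U_0:=K^c$, the family $U_0,U_1,\dots,U_N$ is an \emph{open} cover of $C$, so weak maxitivity applies directly to yield
\[
-J_C \ge \bigwedge_{i=0}^N(-J_{U_i}) \ge \Big(\inf_{x\in C}I_{\min}(x)-\ep\Big)\wedge \ep^{-1},
\]
and letting $\ep\downarrow 0$ finishes. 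In other words, the open cover from the compact step and the set $K^c$ must be combined into a single open cover of $C$ \emph{before} invoking weak maxitivity; there is no intermediate bound on $J_{C\cap K}$ available from the hypotheses.
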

\begin{proof}
In view of Theorem \ref{thm:dualityBounds} and Proposition \ref{prop:lowerBounds}, it is enough to show \eqref{eq:UpperLDP}.  
Suppose that $C\subset E$ is closed. 
Fix $\varepsilon>0$. 
Since $J$ is tight, there exists a compact set such that $-J_{K^c}\ge \varepsilon^{-1}$.   
Due to Proposition~\ref{prop:representationRate} and by compactness, there exists $x_1,\ldots,x_N\in K\cap C$ and open sets $U_1,\ldots,U_N$ such that $x_i\in U_i\subset E$ for all $i\in\{1,2,\ldots,N\}$ and
\[
-J_{U_i}\ge (I(x_i)-\varepsilon)\wedge \varepsilon^{-1}\quad\mbox{ for all }i=1,2,\ldots,N.
\]
Set in addition $U_0:=K^c$. 
We have that $C\subset \cup_{i=0}^N U_i$.  
Since $J$ is weakly maxitive, we have
\[
-J_C \ge \wedge_{i=0}^N (-J_{U_i})\ge \wedge_{i=0}^N (I(x_i)-\varepsilon)\wedge \varepsilon^{-1} 
\ge \left(\underset{x\in C}\inf I(x) -\varepsilon\right)\wedge \varepsilon^{-1}.
\]
Letting $\varepsilon \downarrow 0$, we obtain the result. 
\end{proof}

\section{Integral representation of maxitive monetary risk measures}\label{sec:representation} 
A \emph{monetary risk measure}\footnote{Here, we use the terminology of \cite{follmer} up to a sign change.} is a function
 $\phi\colon B(E)\to[-\infty,\infty]$ 
satisfying:
\begin{itemize}
\item[(N)] Normalization:  $\phi(0)=0$,
\item[(M)] Monotonicity: $\phi(f)\le\phi(g)$ whenever $f\le g$,
\item[(T)] Translation invariance: $\phi(f+c)=\phi(f)+c$ for all~$c\in\R$. 
\end{itemize} 
We say that a monetary risk measure $\phi$ is \emph{maxitive} if it satifies 
$$\phi(f\vee g)\le \phi(f)\vee \phi(g)\quad \mbox{ for all }f,g\in B(E).$$ 
 Due to (b2)--(b4) in Section \ref{sec:preliminaries} the convex integral \eqref{eq:maxitiveInt} is a monetary risk measure which is maxitive if the corresponding concentration is maxitive.  
In the following, we focus on the converse direction and analyze when a monetary risk measure can be represented as a convex integral.     
Denote by $\bar{B}(E)$ the set of all $f\in B(E)$ such that $f$ is bounded from above. 
The following result was obtained in a slightly different setting in \cite[Corollary 6]{cattaneo}. 
We provide a short proof in the present setting.    
\begin{theorem}\label{thm:repI}
Let $\phi\colon B(E)\to[-\infty,\infty]$ be a maxitive monetary risk measure, and $J_A=\phi(-\infty 1_{A^c})$ for all $A\in\mathcal{B}(A)$. 
Then,  $J$ is a maxitive concentration, and 
\[
\phi(f)=\phi_J(f)
\]
for all $f\in \bar{B}(E)$.
\end{theorem}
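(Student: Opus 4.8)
The plan is to show that $J$ is a maxitive concentration and then verify the representation $\phi(f)=\phi_J(f)$ on $\bar B(E)$ by a two-sided inequality, exploiting maxitivity of $\phi$ together with the monotone approximation property (b5).

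\textbf{Step 1: $J$ is a maxitive concentration.} This is immediate from the axioms of a monetary risk measure. Indeed, $J_\emptyset=\phi(-\infty 1_E)=\phi(-\infty)=-\infty$ (using (T) and the convention $-\infty\cdot 0=0$, or directly by monotonicity and translation invariance), and $J_E=\phi(-\infty 1_\emptyset)=\phi(0)=0$ by (N); this gives (a1). For (a2), if $A\subset B$ then $-\infty 1_{A^c}\le -\infty 1_{B^c}$ pointwise, so (M) yields $J_A\le J_B$. For maxitivity (a3), note $-\infty 1_{(A\cup B)^c}=(-\infty 1_{A^c})\vee(-\infty 1_{B^c})$ pointwise, hence by maxitivity of $\phi$ we get $J_{A\cup B}\le J_A\vee J_B$.

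\textbf{Step 2: the inequality $\phi_J(f)\le\phi(f)$ for $f\in\bar B(E)$.} From the definition \eqref{eq:maxitiveInt}, it suffices to show $c+J_{\{f>c\}}\le\phi(f)$ for every $c\in\R$. On $\{f>c\}$ we have $f>c$, i.e.\ $f\ge (c + \text{something nonnegative})$; more to the point, $f\ge c + \infty 1_{\{f>c\}^c}\cdot(-1)$... I would instead argue: the function $g:=c\,1_{\{f>c\}}-\infty 1_{\{f>c\}^c}$ satisfies $g\le f$ pointwise (on $\{f>c\}$, $g=c<f$; on the complement, $g=-\infty\le f$). Hence by (M), $\phi(g)\le\phi(f)$. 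Since $g=(-\infty 1_{\{f>c\}^c})+c$ where the translation is applied only on the set where the function is finite, one must be slightly careful: write $g = (-\infty 1_{\{f>c\}^c}) + c$ under the convention $-\infty+c=-\infty$; then by (T), $\phi(g)=\phi(-\infty 1_{\{f>c\}^c})+c=J_{\{f>c\}}+c$. Combining, $c+J_{\{f>c\}}\le\phi(f)$; taking the supremum over $c$ gives $\phi_J(f)\le\phi(f)$.

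\textbf{Step 3: the reverse inequality $\phi(f)\le\phi_J(f)$ for $f\in\bar B(E)$.} This is the heart of the proof and I expect it to be the main obstacle. Fix $f\in\bar B(E)$, bounded above by some $M\in\R$. The idea is to approximate $f$ from above by simple functions built from its superlevel sets and use maxitivity of $\phi$. For $n\in\N$ and a fine partition $c_0<c_1<\cdots<c_m=M$ of $[\,\inf_E f\wedge(-n),\,M\,]$ with mesh $\le\varepsilon$, consider $g:=\bigvee_{k} \big(c_{k}\,1_{\{f> c_{k-1}\}} - \infty 1_{\{f>c_{k-1}\}^c}\big)\vee (-n)$, a function lying above $f\wedge(-n)$ and within $\varepsilon$ of it where $f\ge -n$. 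By maxitivity of $\phi$ applied finitely many times, $\phi(g)\le\bigvee_k \phi\big(c_k 1_{\{f>c_{k-1}\}}-\infty 1_{\{f>c_{k-1}\}^c}\big)\vee(-n)$, and by (T) the $k$-th term equals $c_k+J_{\{f>c_{k-1}\}}\le (c_{k-1}+\varepsilon)+J_{\{f>c_{k-1}\}}\le\phi_J(f)+\varepsilon$ by the definition of $\phi_J$. Hence $\phi(f\wedge(-n))\le\phi(g)\le(\phi_J(f)+\varepsilon)\vee(-n)$. Letting $\varepsilon\downarrow 0$, then $n\to\infty$ and invoking (b5) (namely $\lim_n\phi(f\vee -n)=\phi(f)$ — one must check this property, originally stated for $\phi_J$, transfers; in fact it follows from (M) and (T) for any monetary risk measure by a standard argument, or one restricts first to $f$ also bounded below and then uses a separate truncation-from-below limit argument), we conclude $\phi(f)\le\phi_J(f)$.

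The delicate points are: (a) handling the value $-\infty$ correctly under translation invariance, which the convention $-\infty\cdot 0=0$ is designed to accommodate; (b) justifying the monotone limits as $n\to\infty$ for $\phi$ itself (not just $\phi_J$) — this should be reduced either to \cite[Lemma 4.1]{kupper2} applied to $\phi$ after Step 2 identifies it with $\phi_J$ on bounded functions, bootstrapping to unbounded-below $f$, or proved directly; and (c) the bookkeeping in the finite maxitivity estimate so that the $\varepsilon$-error does not accumulate over the $m$ pieces — it does not, because maxitivity turns a sum into a maximum. With these in hand, Steps 2 and 3 together give $\phi(f)=\phi_J(f)$ on $\bar B(E)$.
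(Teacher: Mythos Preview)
Your argument is correct in substance, but it differs from the paper's route. The paper does not carry out the level-set approximation directly; instead it extends $\phi$ to an outer functional $\bar\phi(f):=\inf\{\phi(g):g\in B(E),\,f\le g\}$ defined on \emph{all} upper-bounded functions $f\colon E\to\R\cup\{-\infty\}$, observes that $\bar\phi$ is finitely maxitive and additively homogeneous in Cattaneo's sense, and then simply invokes \cite[Corollary~6]{cattaneo} to obtain $\bar\phi(f)=\sup_{c}\{c+\bar J_{\{f\ge c\}}\}$, which specializes back to $\phi(f)=\phi_J(f)$ on $\bar B(E)$. Your approach, by contrast, reproves that representation from scratch via a simple-function sandwich; it is more self-contained and makes the role of maxitivity completely transparent, at the cost of a little bookkeeping that the paper outsources to the reference.

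Two minor corrections to your Step~3. First, throughout you write $f\wedge(-n)$ where you mean $f\vee(-n)$ (truncation from below). Second, your worry about transferring property~(b5) to $\phi$ is unnecessary: your simple function $g$ already dominates $f$ itself (not merely $f\vee(-n)$), so monotonicity gives directly $\phi(f)\le\phi(g)\le(\phi_J(f)+\varepsilon)\vee(-n)$; letting $\varepsilon\downarrow 0$ and then $n\to\infty$ yields $\phi(f)\le\phi_J(f)$ without any appeal to a continuity-from-below property of $\phi$.
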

\begin{proof}
For every function $f\colon E\to\R\cup\{-\infty\}$ that is bounded from above and every set $A\subset E$, we define
\[
\bar{\phi}(f):=\underset{g\in B(E)\colon f\le g}\inf\phi(g),\quad
\mbox{ and }\quad 
\bar{J}_A:=\bar{\phi}(-\infty 1_{A^c}).
\]
Inpection shows that $\bar{\phi}$ is finitely maxitive and additively homogeneous in the sense of \cite{cattaneo}. 
Then, by \cite[Corollary 6]{cattaneo}, we have that
\[
\bar{\phi}(f)=\underset{c\in\R}\sup\{c + \bar{J}_{\{f\ge c\}}\}
\]
for all $f$ that are bounded from above. 
In particular, for $f\in B(E)$, we have 
\[
{\phi}(f)=\bar{\phi}(f)= \underset{c\in\R}\sup\{c + \bar{J}_{\{f\ge c\}}\}\\
=\underset{c\in\R}\sup\{c + {J}_{\{f\ge c\}}\}=\phi_J(f).
\]
This completes the proof. 
\end{proof}

By relaxing the maxitivity condition it is still possible to represent a monetary risk measure on continuous functions. 
The following notion was introduced in \cite{kupper2}. 
\begin{definition}
A monetary risk measure $\phi\colon B(E)\to [-\infty,\infty]$ is said to be weakly maxitive if 
\[
\phi(f)\le \vee_{i=1}^N \phi(g_i)\mbox{ for all $f\in {U}(E)$, $g_1,g_2,\ldots,g_N\in {L}(E)$, $N\in\N$, such that $f\le \vee_{i=1}^N g_i$.}
\]
\end{definition}

Define ${C}(E)={U}(E)\cap {L}(E)$ and $\bar{C}(E)={C}(E)\cap \bar{B}(E)$.   
The following result was shown in  \cite[Theorem 4.2]{kupper2}.\footnote{To apply \cite[Theorem 4.2]{kupper2} here we consider the trivial preorder, i.e. $x\le y$ whenever $x=y$.} 
\begin{theorem}\label{thm:repIII}
Let $\phi\colon B(E)\to[-\infty,\infty]$ be a weakly maxitive monetary risk measure, and $J_A=\phi(-\infty 1_{A^c})$ for all $A\in\mathcal{B}(A)$. 
Then, $J$ is a weakly maxitive concentration and
\[
\phi(f)=\phi_J(f)
\]
for all~$f\in \bar{C}(E)$.  
\end{theorem}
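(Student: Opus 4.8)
The plan is to deduce Theorem~\ref{thm:repIII} from the maxitive representation Theorem~\ref{thm:repI} by passing to a suitable \emph{maxitive envelope} of $\phi$, restricted to the class of functions where the two theorems overlap. More precisely, for a function $f\colon E\to\R\cup\{-\infty\}$ that is bounded from above, I would define
\[
\tilde\phi(f):=\inf\Big\{\vee_{i=1}^N\phi(g_i)\;:\;N\in\N,\ g_1,\dots,g_N\in L(E),\ f\le\vee_{i=1}^N g_i\Big\},
\]
i.e.\ the smallest maxitive functional dominating $\phi$ on lower semicontinuous functions, evaluated via finite-maximum covers. First I would check that $\tilde\phi$ (suitably restricted, or extended to all of $\bar B(E)$ by the same infimum over dominating measurable functions as in the proof of Theorem~\ref{thm:repI}) is a maxitive monetary risk measure: normalization and translation invariance are immediate from (N),(T) for $\phi$ together with the fact that the constant function dominates nothing cheaper, monotonicity is built into the infimum, and finite maxitivity $\tilde\phi(f\vee g)\le\tilde\phi(f)\vee\tilde\phi(g)$ follows by concatenating covers of $f$ and of $g$. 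The weak maxitivity hypothesis on $\phi$ is exactly what guarantees the normalization is not destroyed, i.e.\ $\tilde\phi(0)=0$: any cover $0\le\vee_i g_i$ with $g_i\in L(E)$ forces $\vee_i\phi(g_i)\ge\phi(0)=0$ by weak maxitivity (applied with $f=0\in U(E)$), while the trivial cover $g_1=0$ gives the reverse, so $\tilde\phi(0)=0$.

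Next I would apply Theorem~\ref{thm:repI} to $\tilde\phi$: it yields a maxitive concentration $\tilde J_A:=\tilde\phi(-\infty 1_{A^c})$ with $\tilde\phi(f)=\phi_{\tilde J}(f)$ for all $f\in\bar B(E)$. The two steps that require real work are, first, identifying $\tilde J$ with the concentration $J_A=\phi(-\infty 1_{A^c})$ of the statement — i.e.\ showing $\tilde\phi(-\infty 1_{A^c})=\phi(-\infty 1_{A^c})$ for every Borel set $A$ — and second, showing $\tilde\phi(f)=\phi(f)$ for every $f\in\bar C(E)$. For the first point, $\tilde\phi\ge\phi$ is trivial (take $N=1$, $g_1=f$ is not in $L(E)$ in general, so one actually argues $\tilde\phi(-\infty 1_{A^c})\ge\phi(-\infty 1_{A^c})$ directly: any cover $-\infty 1_{A^c}\le\vee_i g_i$ with $g_i\in L(E)$ has each $g_i\ge -\infty 1_{A^c}$, hence... in fact one wants $\vee_i\phi(g_i)\ge\phi(-\infty 1_{A^c})$, which is monotonicity of $\phi$ applied term by term only if one term already dominates; instead invoke weak maxitivity with $f=-\infty 1_{A^c}\in U(E)$ to get $\phi(-\infty 1_{A^c})\le\vee_i\phi(g_i)$). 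For the reverse inequality $\tilde\phi(-\infty 1_{A^c})\le\phi(-\infty 1_{A^c})$ there is no obvious cheap cover, so here I would instead directly prove the key identity $\tilde\phi=\phi$ on $\bar C(E)$ and on functions of the form $-\infty 1_{A^c}$ simultaneously, or — more likely the cleanest route — simply \emph{cite} Theorem~4.2 of \cite{kupper2} as the statement already indicates, since the excerpt explicitly attributes Theorem~\ref{thm:repIII} to that reference.

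Given that attribution, the honest proof proposal is: \textbf{reduce to the cited theorem.} I would write that $J$ is a weakly maxitive concentration because (a1) follows from (N), (a2) from (M), and the weak maxitivity of $J$ translates the weak maxitivity of $\phi$ by testing on indicator-type functions $-\infty 1_{C^c}\in U(E)$ and $-\infty 1_{O_i^c}\in L(E)$ (note that for open $O$, $-\infty 1_{O^c}$ is lower semicontinuous, and for closed $C$, $-\infty 1_{C^c}$ is upper semicontinuous, so the condition $C\subset\cup_i O_i$ is precisely $-\infty 1_{C^c}\le\vee_i(-\infty 1_{O_i^c})$). Then the representation $\phi(f)=\phi_J(f)$ for $f\in\bar C(E)$ is the content of \cite[Theorem 4.2]{kupper2} specialized to the trivial preorder, as the footnote already notes; I would reproduce the one-line reduction (the trivial preorder makes ``increasing functions'' all functions) and refer the reader there for the remaining details. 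The main obstacle, if one insisted on a self-contained argument, is the reverse inequality $\phi(f)\ge\phi_J(f)$ on $\bar C(E)$: the easy direction $\phi(f)\le\phi_J(f)$ comes from approximating $f$ from above by simple functions built from its superlevel sets (which are open when $f\in L(E)$) and using weak maxitivity plus (b5)-type truncation, but the sharp lower bound requires the duality machinery of Theorem~\ref{thm:dualityBounds} together with the minimal rate function, and that is exactly the depth of \cite{kupper2} that I would not re-derive here.
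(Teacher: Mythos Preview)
Your final proposal --- verify that $J$ is a weakly maxitive concentration via the indicator translation and then cite \cite[Theorem~4.2]{kupper2} with the trivial preorder --- is exactly what the paper does: the paper gives no self-contained proof at all, only the citation and the footnote about the trivial preorder. Your check that $-\infty 1_{C^c}\in U(E)$ for closed $C$, $-\infty 1_{O^c}\in L(E)$ for open $O$, and that $C\subset\bigcup_i O_i$ is equivalent to $-\infty 1_{C^c}\le\vee_i(-\infty 1_{O_i^c})$ is correct and in fact fills in a detail the paper leaves implicit.

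One small correction to your closing discussion (which is tangential, since it is part of the abandoned self-contained route): you have the two directions swapped. The inequality $\phi(f)\ge\phi_J(f)$ is the \emph{trivial} one --- for every $c$ one has $f-c\ge -\infty\,1_{\{f\le c\}}$ pointwise, so monotonicity and translation invariance alone give $\phi(f)\ge c+J_{\{f>c\}}$ and hence $\phi(f)\ge\phi_J(f)$; no duality is needed. The direction that genuinely uses weak maxitivity (and the continuity of $f$, so that both sub- and superlevel sets are available) is $\phi(f)\le\phi_J(f)$, and your sketch of approximating $f\in\bar C(E)$ from above by step functions built on open superlevel sets is the right idea for it. The exploratory maxitive-envelope construction $\tilde\phi$ is interesting but, as you yourself noticed, founders on identifying $\tilde J$ with $J$ on arbitrary Borel sets; this is not needed once one argues directly on $\bar C(E)$.
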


Given a function $\phi\colon B(E)\to[-\infty,\infty]$, we define the sets 
\[
B_{\phi}(E):=\left\{f\in B(E)\colon \mbox{there exists }t>1\mbox{ such that }\phi(t f)<\infty\right\},
\]
\[
C_{\phi}(E):=\left\{f\in C(E)\colon \mbox{there exists }t>1\mbox{ such that }\phi(t f)<\infty\right\}.
\] 
\begin{lemma}\label{lem:tail}
Let $\phi\colon B(E)\to[-\infty,\infty]$ be a monetary risk measure. 
If $f\in B_{\phi}(E)$, then
\[
\underset{m\to\infty}\lim{\phi}\left(f 1_{\{f\ge m\}}-\infty 1_{\{f<m\}}\right)=\underset{m\to\infty}\lim{\phi}\left(f 1_{\{f> m\}}-\infty 1_{\{f\le m\}}\right)=-\infty.
\]
\end{lemma}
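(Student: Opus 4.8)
The plan is to exploit translation invariance in order to dominate each truncated tail of $f$ by a translate $tf-(t-1)m$ of $tf$ whose additive constant tends to $-\infty$, and then to invoke the defining property of $B_\phi(E)$, namely that $\phi(tf)<\infty$ for some $t>1$.

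First I would fix such a $t>1$ and abbreviate $f_m:=f 1_{\{f\ge m\}}-\infty 1_{\{f<m\}}$ and $g_m:=f 1_{\{f>m\}}-\infty 1_{\{f\le m\}}$; both belong to $B(E)$ since the level sets of $f$ are Borel and we use the convention $-\infty\cdot 0=0$. Because the sets $\{f\ge m\}$ decrease as $m$ increases, the functions $f_m$ are pointwise non-increasing in $m$, hence by monotonicity (M) the limit $\lim_{m\to\infty}\phi(f_m)$ exists in $[-\infty,\infty]$; likewise for $g_m$. It then remains only to show these limits equal $-\infty$.

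The \emph{key step} is the pointwise inequality $f_m\le tf-(t-1)m$ valid on all of $E$: on $\{f\ge m\}$ one has $(t-1)f\ge(t-1)m$, so $f\le tf-(t-1)m$ there, while on $\{f<m\}$ the left-hand side equals $-\infty$ and the right-hand side is $\ge-\infty$ trivially (noting $tf=-\infty$ wherever $f=-\infty$, as $t>0$). Applying (M) and then translation invariance (T) gives $\phi(f_m)\le\phi(tf-(t-1)m)=\phi(tf)-(t-1)m$. Since $\phi(tf)<\infty$ and $t>1$, the right-hand side tends to $-\infty$ as $m\to\infty$ --- immediately if $\phi(tf)=-\infty$, and otherwise because $\phi(tf)$ is a real number while $(t-1)m\to\infty$ --- so $\lim_{m\to\infty}\phi(f_m)=-\infty$. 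For the second limit I would simply observe that $g_m\le f_m$ pointwise (they agree on $\{f>m\}$, and on $\{f=m\}$ we have $g_m=-\infty\le m=f_m$), whence $\phi(g_m)\le\phi(f_m)$ by (M) and the same conclusion follows.

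I do not expect a genuine obstacle here; the argument is essentially the three-line chain of inequalities together with the two axioms (M) and (T). The only points that deserve care are the bookkeeping with $-\infty$-valued functions under the convention $-\infty\cdot 0=0$, checking that the displayed pointwise inequality really holds at every point of $E$ (not merely on the support of $f_m$), and recording the monotonicity of $\phi(f_m)$ in $m$ so that the limit is known to exist before it is evaluated.
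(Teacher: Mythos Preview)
Your proof is correct and follows essentially the same approach as the paper: both establish the bound $\phi(f_m)\le\phi(tf)-(t-1)m$ via monotonicity and translation invariance, and then let $m\to\infty$. Your derivation of the key pointwise inequality $f_m\le tf-(t-1)m$ is in fact more direct than the paper's, which reaches the same bound through the substitution $g=\exp(f-m)$ and the observation $\log(g)1_{\{g\ge1\}}-\infty 1_{\{g<1\}}\le\log(g^t)$.
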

\begin{proof}
Suppose that $f\in B_{\phi}(E)$, and take $t>1$ such that $\phi(t f)<\infty$.  
Fix $m\in\N$ and define $g=\exp(f-m)$. 
Then, by translation invariance and monotonicity, we have
\begin{align*}
-m + \phi\left(f 1_{\{f\ge m\}}-\infty 1_{\{f<m\}}\right)&=\phi\left(-m  + f 1_{\{f\ge m\}}-\infty 1_{\{f<m\}}\right)\\
&=\phi\left(\log(g)1_{\{g\ge 1\}} - \infty 1_{\{g< 1\}}\right)\\
&\le\phi\left(\log(g^t)\right)\\
&=\phi\left(t(f-m)\right)\\
&=-mt + \phi\left(t f\right).
\end{align*}
Therefore, it follows that
\begin{align*}
\underset{m\to\infty}\lim{\phi}\left(f 1_{\{f> m\}}-\infty 1_{\{f\le m\}}\right)&\le 
\underset{m\to\infty}\lim{\phi}\left(f 1_{\{f\ge m\}}-\infty 1_{\{f<m\}}\right)\\
&\le \underset{m\to\infty}\lim\Big(m(1-t) + \phi\left(t f\right)\Big)
=-\infty, 
\end{align*}
where the latter limit is $-\infty$ since $t>1$ and $\phi\left(t f\right)<-\infty$. This completes the proof. 
\end{proof}

We next extend Theorem~\ref{thm:repI} and Theorem~\ref{thm:repIII} to unbounded functions as follows.
\begin{theorem}\label{thm:repII}
Let $\phi\colon B(E)\to[-\infty,\infty]$ be a monetary risk measure, and the concentration $J_A=\phi(-\infty 1_{A^c})$ for all $A\in\mathcal{B}(E)$. Then,
\begin{enumerate}
\item if $\phi$ is maxitive, then $\phi(f)=\phi_J(f)$ for all $f\in B_{\phi}(E)$,
\item if $\phi$ is weakly maxitive, then $\phi(f)=\phi_J(f)$ for all $f\in C_{\phi}(E)$.
\end{enumerate}
\end{theorem}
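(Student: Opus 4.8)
The plan is to reduce the unbounded statement to the bounded cases already handled by Theorem~\ref{thm:repI} and Theorem~\ref{thm:repIII} by a truncation argument, using Lemma~\ref{lem:tail} to kill the tail and property~(b5) to pass to the limit inside the convex integral. First note that $J$ is a (weakly) maxitive concentration by those same theorems, so $\phi_J$ is well defined and satisfies (b1)--(b5). Fix $f\in B_\phi(E)$ for part~(1), or $f\in C_\phi(E)\subseteq B_\phi(E)$ for part~(2). For $m\in\N$ set
\[
f_m:=f\wedge m,\qquad r_m:=f 1_{\{f>m\}}-\infty 1_{\{f\le m\}},
\]
so that the pointwise identity $f=f_m\vee r_m$ holds: on $\{f\le m\}$ one has $f_m=f$ and $r_m=-\infty$, while on $\{f>m\}$ one has $f_m=m<f=r_m$. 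Then $f_m\in\bar B(E)$, and if moreover $f\in C(E)$ then $f_m\in\bar C(E)$, since $C(E)$ is stable under taking finite minima with constants. Also $r_m\in B(E)$ always, and $r_m\in L(E)$ whenever $f\in C(E)$, because $\{r_m>a\}=\{f>m\vee a\}$ is open for every $a\in\R$.

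The next step is to establish the sandwich
\[
\phi(f_m)\le\phi(f)\le\phi(f_m)\vee\phi(r_m)\qquad\text{for all }m\in\N.
\]
The first inequality is monotonicity~(M), as $f_m\le f$. The second, in part~(1), is maxitivity applied to $f=f_m\vee r_m$; in part~(2) it is weak maxitivity, which applies precisely because $f\in U(E)$ and $f_m,r_m\in L(E)$, so that $f\le f_m\vee r_m$ is a legitimate instance of the defining inequality of weak maxitivity. Since $f\in B_\phi(E)$, Lemma~\ref{lem:tail} yields $\phi(r_m)\to-\infty$ as $m\to\infty$. As $(\phi(f_m))_{m}$ is nondecreasing by~(M) and bounded above by $\phi(f)$, its limit $L:=\lim_m\phi(f_m)$ exists in $[-\infty,\infty]$ with $L\le\phi(f)$; conversely, letting $m\to\infty$ in $\phi(f)\le\phi(f_m)\vee\phi(r_m)\le L\vee\phi(r_m)$ and using $\phi(r_m)\to-\infty$ gives $\phi(f)\le L$. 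Hence $\phi(f)=\lim_{m\to\infty}\phi(f_m)$.

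Finally, I would invoke the bounded representation theorems: $\phi(f_m)=\phi_J(f_m)$ for all $m$, by Theorem~\ref{thm:repI} in part~(1) (as $f_m\in\bar B(E)$) and by Theorem~\ref{thm:repIII} in part~(2) (as $f_m\in\bar C(E)$). Combining with property~(b5), namely $\phi_J(f\wedge m)\to\phi_J(f)$, we conclude $\phi(f)=\lim_{m\to\infty}\phi(f_m)=\lim_{m\to\infty}\phi_J(f_m)=\phi_J(f)$, which is the claim. The only point that genuinely needs care is the bookkeeping that keeps the truncation decomposition $f=f_m\vee r_m$ inside the function classes to which (weak) maxitivity and the bounded representation theorems apply — in particular the lower semicontinuity of $r_m$ when $f$ is continuous — together with the passage from the one-sided tail estimate of Lemma~\ref{lem:tail} to the equality $\phi(f)=\lim_m\phi(f_m)$; beyond that the argument is routine.
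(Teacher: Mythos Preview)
Your proof is correct and follows essentially the same approach as the paper: decompose $f=(f\wedge m)\vee r_m$, apply (weak) maxitivity to obtain the upper sandwich bound, kill the tail via Lemma~\ref{lem:tail}, and then invoke the bounded representations (Theorems~\ref{thm:repI}/\ref{thm:repIII}) together with property~(b5) to pass to the limit. The paper only spells out case~(2) and is terser about the function-class verifications you carry out explicitly (in particular the lower semicontinuity of $r_m$), but the underlying argument is identical.
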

\begin{proof}
We prove (2). 
Suppose that $\phi$ is weakly maxitive. 
Fix $f\in C_{\phi}(E)$, and $n\in\N$.  
Since
$$f\le (f \wedge n)\vee \left(f1_{\{f> n\}}-\infty 1_{\{f\le n\}}\right).$$
Since $f$ is upper semicontinuous, and the functions in the maximum on the right hand side are lower semicontinuous,  
 it follows from the weak maxitivity of $\phi$ that
\begin{align*}
{\phi}(f)&\le {\phi}\left(f \wedge n\right)\vee {\phi}\left(f 1_{\{f> n\}}-\infty 1_{\{f\le n\}}\right)\\
&={\phi}_J\left(f \wedge n\right)\vee {\phi}\left(f 1_{\{f> n\}}-\infty 1_{\{f\le n\}}\right).
\end{align*}
where we have applied that ${\phi}\left(f \wedge n\right)={\phi}_J\left(f \wedge n\right)$ by Theorem~\ref{thm:repI}. 
On the other hand, 
\[
\phi(f)\ge \phi(f \wedge n)=\phi_J(f \wedge n).
\]
Both things together yield 
\[
\phi_J(f \wedge n)\le \phi(f)\le {\phi}_J\left(f \wedge n\right)\vee {\phi}\left(f1_{\{f> n\}}-\infty 1_{\{f\le n\}}\right).
\]
We have $\lim_{n\to\infty}{\phi}\left(f1_{\{f> n\}}-\infty 1_{\{f\le n\}}\right)=-\infty$ by Lemma \ref{lem:tail}. 
Then, by letting $n\to \infty$, we get that $\phi(f)=\phi_J(f)$.  
\end{proof}

\section{Main result}\label{sec:main}
Throughout this section we consider two monetary risk measures $\underline{\phi},\overline{\phi}\colon B(E)\to [-\infty,\infty]$ which satisfy the following.
\begin{assumption}\label{ass:jointMax}\mbox{}\newline\vspace{-0.5cm}
\begin{enumerate}
\item For every $f\in B(E)$, $\underline{\phi}(f)\le\overline{\phi}(f)$, 
\item $\overline{\phi}$ is weakly maxitive,
\item
$
\underline{\phi}(f)\le \underline{\phi}(g_1)\vee\Big(\vee_{i=2}^N\overline{\phi}(g_i)\Big)
$ 
for all $f\in {U}(E)$, and $g_1,g_2,\ldots,g_N\in {L}(E)$, $N\in\mathbb{N}$, such that $f\le \vee_{i=1}^N g_i$.\end{enumerate}
\end{assumption} 
\begin{remark}
 Assumption \ref{ass:jointMax} covers the case of a single (weakly) maxitive monetary risk measure $\phi$ by taking $\underline{\phi}:=\overline{\phi}:=\phi$. 
 In that case, (1)--(3) are automatically satisfied. 
In the application in next section, we will deal with lower/upper large deviations bounds, this is the reason why we consider a pair of monetary risk measures rather than a single one.  
\end{remark}
We consider the concentrations $\underline{J},\overline{J}\colon \mathcal{B}(E)\to [-\infty,0]$ given by $\underline{J}_A=\underline{\phi}(-\infty 1_{A^c})$ and $\overline{J}_A=\overline{\phi}(-\infty 1_{A^c})$. 
In addition, we denote by $\underline{I},\overline{I}$ the respective minimal rate functions defined as in \eqref{eq:minRate}. 
As in \cite{kupper}, we introduce the LDP and LP for monetary risk measures.
\begin{definition}
Suppose that $I\colon E\to[0,\infty]$ is a rate function. 
\begin{itemize}
\item We say that the pair $\underline{\phi},\overline{\phi}$ satisfies the \emph{large deviation principle} (LDP) with rate function $I(\cdot)$ if 
\[
-\underset{x\in {\rm int}(A)}\inf I(x)\le \underline{J}_A \le \overline{J}_A \le -\underset{x\in {\rm cl}(A)}\inf I(x)\quad\mbox{ for all }A\in \mathcal{B}(E). 
\]
\item We say that the pair $\underline{\phi},\overline{\phi}$ satisfies the \emph{Laplace principle} (LDP) with rate function 
\[
\underline{\phi}(f)=\overline{\phi}(f)=\underset{x\in E}\sup\{f(x)-I(x)\}\quad\mbox{ for all }f\in C_{\overline{\phi}}(E).
\] 
\end{itemize}
\end{definition} 
\begin{remark}
In \cite[Proposition 5.2]{kupper} it is proven the equivalence between LDP and LP under the hypothesis that $E$ is normal (the normality is only needed to prove that LP implies LPD).  
Notice, that in \cite{kupper} it is defined the LP on $C_b(E)$. 
Here, we consider the larger space $C_{\overline{\phi}}(E)$. 
That LDP implies LP as defined above is an easy consequence of the duality bounds proven in \cite{kupper2} (see Theorem \ref{thm:dualityBounds}). We give the argument in the Appendix A below. 
\end{remark} 
In the following, let $\mathcal{H}$ be a distinguished nonempty set of continuous real-valued functions on $E$.     
We define the \emph{conjugate} $\overline{\phi}_{\mathcal{H}}^\ast\colon E\to[-\infty,\infty]$ of $\overline{\phi}$ with respect to $\mathcal{H}$ as
$$
\overline{\phi}_{\mathcal{H}}^\ast(x):=\underset{f\in \mathcal{H}}\sup\{f(x)-\overline{\phi}(f)\}. 
$$
\begin{definition} 
We say that $x\in E$ is an $\mathcal{H}$-\emph{exposed point} of $\overline{\phi}_{\mathcal{H}}^\ast$ if there exists $f\in \mathcal{H}$ such that
\[
f(y)-\overline{\phi}_{\mathcal{H}}^\ast(y)<f(x)-\overline{\phi}_{\mathcal{H}}^\ast(x)\quad\mbox{ for all }y\neq x. 
\]
In that case, we say that $f$ is an \emph{exposing function} for $x$. 
We denote by $\mathscr{E}$ the set of all $\mathcal{H}$-exposed  points $x\in E$ of $\overline{\phi}_{\mathcal{H}}^\ast$ which admit an exposing function $f\in \mathcal{H}$ such that 
\begin{equation}\label{eq:niceExp}
\overline{\phi}(f)=\underline{\phi}(f),\quad\mbox{ and }\quad f\in B_{\overline{\phi}}(E).
\end{equation}
\end{definition} 

\begin{remark}
If $x$ is an $\mathcal{H}$-{exposed point}, then $\overline{\phi}_{\mathcal{H}}^\ast(x)<\infty$ and $f(x)>-\infty$. 
\end{remark}

We present the main result of this section.
\begin{theorem}\label{thm:main}
Suppose that $\overline{J}$ is tight.   
Then:
\begin{enumerate}[(i)]
\item For all $x\in\mathscr{E}$, $\underline{I}(x)=\overline{I}(x)=\overline{\phi}^\ast_{\mathcal{H}}(x)$.  
\item For every closed set $C\subset E$, we have the upper bound
\[
\overline{J}_C\le -\underset{y\in C}\inf \overline{\phi}_{\mathcal{H}}^\ast(x).
\]
\item For every open set $O\subset E$, we have the lower bound
\[
 -\underset{y\in O\cap\mathscr{E}}\inf \overline{\phi}_{\mathcal{H}}^\ast(x)\le \underline{J}_O.
\]
\item If, moreover,  
\begin{equation}\label{eq:condition}
\underset{x\in O}\inf \overline{\phi}_{\mathcal{H}}^\ast(x)=\underset{x\in O\cap\mathscr{E}}\inf \overline{\phi}_{\mathcal{H}}^\ast(x)
\quad\mbox{ for every } O\subset E\mbox{ open,}
\end{equation} 
then the pair $\underline{\phi},\overline{\phi}$ verifies the LDP and LP with rate function $\overline{\phi}_{\mathcal{H}}^\ast$.  
\end{enumerate}
\end{theorem}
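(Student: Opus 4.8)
The proof naturally decomposes into the four items, and the key technical device throughout is the convex integral representation of weakly maxitive monetary risk measures (Theorems~\ref{thm:repII} and~\ref{thm:repIII}) together with the duality bounds of Theorem~\ref{thm:dualityBounds} and Proposition~\ref{prop:representationRate}. Since $\overline{J}$ is tight and weakly maxitive, Proposition~\ref{prop:Bryc} already gives that $\overline{I}$ satisfies all four inequalities \eqref{eq:LowerLDP}--\eqref{eq:UpperLP} for $\overline{\phi}$; the content of the theorem is to replace $\overline{I}$ by the conjugate $\overline{\phi}_{\mathcal{H}}^\ast$, which is a priori smaller, and to control the lower bound in terms of the exposed points.

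\emph{Item (i): behavior at exposed points.} Fix $x\in\mathscr{E}$ with exposing function $f\in\mathcal{H}$ satisfying \eqref{eq:niceExp}. The easy inequality is $\overline{I}(x)\ge\overline{\phi}_{\mathcal{H}}^\ast(x)$: indeed, by Proposition~\ref{prop:representationRate}, $-\overline{I}(x)=\inf_{U\in\mathcal{U}_x}\overline{J}_U$, and using monotonicity of $\overline{\phi}$ one can bound each $\overline{J}_U$ from above by $g(x)-\overline{\phi}(g)$ for any $g\in\mathcal{H}$ by playing off $g$ against $-\infty 1_{U^c}$; taking the supremum over $g$ gives $-\overline{I}(x)\le -\overline{\phi}_{\mathcal{H}}^\ast(x)$. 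For the reverse inequality $\underline{I}(x)\le\overline{\phi}_{\mathcal{H}}^\ast(x)$, the plan is the classical Gärtner--Ellis tilting argument transported to this setting: using the exposing function $f$, for each neighborhood $U$ of $x$ estimate $\underline{J}_U$ from below by a tilted quantity. Concretely, one writes, for large $m$,
\[
f1_{U}-\infty 1_{U^c}\ \ge\ \big(f - (\alpha + \delta)\big)\wedge m \ +\ \text{correction terms},
\]
where $\alpha=f(x)-\overline{\phi}_{\mathcal{H}}^\ast(x)=\sup_{y}(f(y)-\overline{\phi}_{\mathcal{H}}^\ast(y))$, and splits $f$ over $U$ and $U^c$; on $U^c$ the exposing property forces $f(y)-\overline{\phi}_{\mathcal{H}}^\ast(y)$ to be bounded away from $\alpha$ once $U$ is small, so that the contribution of $U^c$ can be absorbed using the tail estimate from Lemma~\ref{lem:tail} (this is where $f\in B_{\overline{\phi}}(E)$ is used) and tightness of $\overline{J}$. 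Applying $\underline{\phi}$, using $\underline{\phi}(f)=\overline{\phi}(f)$, translation invariance, (b5), and the representation $\underline{\phi}(g\wedge m)=\phi_{\underline{J}}(g\wedge m)$, one extracts $-\underline{I}(x)=\inf_U\underline{J}_U\ge -\overline{\phi}_{\mathcal{H}}^\ast(x)$. Chaining with $\underline{I}\le\overline{I}$ (which follows from Assumption~\ref{ass:jointMax}(1)) and the easy inequality yields the chain of equalities.

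\emph{Items (ii)--(iv).} For (ii), since $\overline{\phi}_{\mathcal{H}}^\ast\le\overline{I}$ pointwise is false in general (it is the other way), one instead argues directly: $\overline{\phi}_{\mathcal{H}}^\ast$ is lower semicontinuous as a supremum of continuous functions $f-\overline{\phi}(f)$, and one checks that $\overline{\phi}(f)\le\sup_{x}(f(x)-\overline{\phi}_{\mathcal{H}}^\ast(x))$ for $f\in\mathcal{H}$ by the very definition of the conjugate; but (ii) is the statement $\overline{J}_C\le -\inf_C\overline{\phi}_{\mathcal{H}}^\ast$, which I would in fact deduce from Proposition~\ref{prop:Bryc} applied to $\overline{\phi}$ \emph{provided} $\overline{I}\ge\overline{\phi}_{\mathcal{H}}^\ast$ --- and this last inequality is exactly the easy direction established in (i), valid at \emph{every} point (not just exposed ones), since that argument used nothing about exposedness. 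Thus (ii) is immediate once one observes $\overline{\phi}_{\mathcal{H}}^\ast\le\overline{I}$ everywhere. For (iii): given an open $O$, for each $x\in O\cap\mathscr{E}$ pick a neighborhood base; by (i), $-\underline{I}(x)=-\overline{\phi}_{\mathcal{H}}^\ast(x)$, and by Proposition~\ref{prop:lowerBounds} (applied to $\underline{J}$, which is weakly maxitive by Assumption~\ref{ass:jointMax}(3) with all $g_i$ lower semicontinuous), $\underline{I}$ satisfies \eqref{eq:LowerLDP}, i.e. $-\inf_{O}\underline{I}\le\underline{J}_O$; restricting the infimum to $O\cap\mathscr{E}$ and using (i) gives $-\inf_{O\cap\mathscr{E}}\overline{\phi}_{\mathcal{H}}^\ast\le\underline{J}_O$. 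Finally (iv): condition \eqref{eq:condition} turns the lower bound of (iii) into $-\inf_{O}\overline{\phi}_{\mathcal{H}}^\ast\le\underline{J}_O$ for all open $O$; combined with the upper bound (ii), $\overline{\phi}_{\mathcal{H}}^\ast$ satisfies \eqref{eq:LowerLDP} and \eqref{eq:UpperLDP} for the pair, which is precisely the LDP, and the LP follows by the duality bounds of Theorem~\ref{thm:dualityBounds} together with the representations $\underline{\phi}=\phi_{\underline{J}}$, $\overline{\phi}=\phi_{\overline{J}}$ on $C_{\overline{\phi}}(E)$ (Theorem~\ref{thm:repII}) and a squeezing argument, as indicated in the remark before the theorem.

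\textbf{Main obstacle.} The crux is the tilting estimate in (i) giving $\underline{I}(x)\le\overline{\phi}_{\mathcal{H}}^\ast(x)$: one must carefully localize near $x$ so that, outside a small neighborhood $U$, the exposing inequality $f(y)-\overline{\phi}_{\mathcal{H}}^\ast(y)<f(x)-\overline{\phi}_{\mathcal{H}}^\ast(x)$ upgrades to a \emph{uniform} gap (this requires combining the strict inequality with lower semicontinuity of $\overline{\phi}_{\mathcal{H}}^\ast$, upper semicontinuity of $f$, and tightness to reduce to a compact complement), and simultaneously control the far-away tail $\{f>m\}$ via Lemma~\ref{lem:tail}. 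Balancing these two error terms as $U$ shrinks and $m\to\infty$, while only having the convex-integral representation of $\underline{\phi}$ on \emph{continuous} (or bounded-above measurable, in the maxitive case) functions, is the delicate point; everything else is bookkeeping with the duality bounds already in hand.
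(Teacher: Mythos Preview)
Your overall strategy matches the paper's: the ``easy'' inequality $\overline{\phi}_{\mathcal{H}}^\ast\le\overline{I}\le\underline{I}$ everywhere (the paper's Lemma~\ref{lem:rateMinKappa}), a tilting/localization argument at exposed points for the reverse inequality, and then (ii)--(iv) by bookkeeping with Propositions~\ref{prop:lowerBounds} and~\ref{prop:Bryc}. Two corrections are needed, one cosmetic and one substantive.

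First, the chain you write is inverted: from $\underline{J}\le\overline{J}$ one gets $\phi_{\underline{J}}\le\phi_{\overline{J}}$ and hence $\underline{I}\ge\overline{I}$, not $\underline{I}\le\overline{I}$. (This is what you actually need to close the loop $\overline{\phi}_{\mathcal{H}}^\ast\le\overline{I}\le\underline{I}\le\overline{\phi}_{\mathcal{H}}^\ast$.) Relatedly, your aside in (ii) that ``$\overline{\phi}_{\mathcal{H}}^\ast\le\overline{I}$ pointwise is false in general'' contradicts what you correctly say two lines later; it \emph{is} the easy direction and holds at every point.

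Second, and this is the real gap: in both (i) and (iv) you invoke a convex-integral representation $\underline{\phi}=\phi_{\underline{J}}$ on $\bar{C}(E)$ or $C_{\overline{\phi}}(E)$ via Theorems~\ref{thm:repII}/\ref{thm:repIII}. Those theorems require the monetary risk measure to be weakly maxitive, and $\underline{\phi}$ is \emph{not} assumed weakly maxitive---only the joint condition Assumption~\ref{ass:jointMax}(3) is available. The paper circumvents this: the key localization estimate (isolated as Proposition~\ref{eq:concentrationExp}) applies Assumption~\ref{ass:jointMax}(3) directly to $\underline{\phi}(f)$ with the cover $U\cup W\cup(K^c\cap\{f<n+1\})\cup\{f>n\}$, so that one term remains $\underline{\phi}(f1_U-\infty 1_{U^c})$ and the remaining terms are $\overline{\phi}$-terms, for which the representation \emph{is} available and can be bounded using Proposition~\ref{prop:Bryc}, Lemma~\ref{lem:rateMinKappa}, tightness, and Lemma~\ref{lem:tail}. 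Likewise, for (iv) the LP lower bound $\underline{\phi}(f)\ge\sup_x\{f(x)-I(x)\}$ follows from monotonicity and translation invariance of $\underline{\phi}$ alone (cf.\ Proposition~\ref{prop:LDPimpliesLP}), without any representation. Your sketch has all the right ingredients (uniform exposing gap via compactness, tail control via Lemma~\ref{lem:tail}, tightness); you just need to route them through Assumption~\ref{ass:jointMax}(3) rather than through a representation for $\underline{\phi}$ that you do not have.
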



\begin{remark}
Given a sequence of $E$-valued random variables $(X_n)_{n\in\N}$ defined in a probability space $(\Omega,\mathcal{F},\mathbb{P})$, we define the upper/lower asymptotic entropies by
\[
\underline{\psi}(f):=\underset{n\to\infty}\liminf \frac{1}{n}\log\mathbb{E}_n[e^{n f(X_n)}],
\quad
\overline{\psi}(f):=\underset{n\to\infty}\limsup \frac{1}{n}\log\mathbb{E}_n[e^{n f(X_n)}].
\]    
We prove in Section \ref{sec:LDsublinearExp} below that $\underline{\psi},\overline{\psi}$ satisfy  Assumption \ref{ass:jointMax}, in addition, we also have that the respective concentrations $\underline{J},\overline{J}$ are given by
\[
\underline{J}_A=\underset{n\to\infty}\liminf \frac{1}{n}\log\mathbb{P}(X_n\in A),\quad
\overline{J}_A=\underset{n\to\infty}\limsup \frac{1}{n}\log\mathbb{P}(X_n\in A).
\] 
In addition, the tightness of $\overline{J}$ is exactly the exponential tightness of the sequence $(X_n)_{n\in\N}$. 
Then, we have that Theorem \ref{thm:linear} in the introduction is a direct consequence of  Theorem \ref{thm:main}.
\end{remark}

In order to prove Theorem \ref{thm:main}, we need some preliminary results. 

\begin{lemma}\label{lem:rateMinKappa}
$\overline{\phi}_{\mathcal{H}}^\ast(x)\le \overline{I}(x)\le \underline{I}(x)$ for all~$x\in E$.
\end{lemma}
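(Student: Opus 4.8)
The plan is to prove the chain of inequalities $\overline{\phi}_{\mathcal{H}}^\ast(x)\le \overline{I}(x)\le \underline{I}(x)$ for each $x\in E$ separately. The second inequality is the easy one: by Assumption \ref{ass:jointMax}(1) we have $\underline{\phi}\le\overline{\phi}$, hence $\underline{J}_A=\underline{\phi}(-\infty 1_{A^c})\le\overline{\phi}(-\infty 1_{A^c})=\overline{J}_A$ for every $A\in\mathcal{B}(E)$; passing to the defining supremum in \eqref{eq:minRate} or, more directly, using the neighborhood representation of Proposition \ref{prop:representationRate}, we get $-\underline{I}(x)=\inf_{U\in\mathcal{U}_x}\underline{J}_U\le\inf_{U\in\mathcal{U}_x}\overline{J}_U=-\overline{I}(x)$, i.e. $\overline{I}(x)\le\underline{I}(x)$.

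The first inequality $\overline{\phi}_{\mathcal{H}}^\ast(x)\le\overline{I}(x)$ is where the work lies. I would fix $x\in E$ and an arbitrary $f\in\mathcal{H}$; the goal is to show $f(x)-\overline{\phi}(f)\le\overline{I}(x)$, after which taking the supremum over $f\in\mathcal{H}$ gives the claim. The natural route is through the definition \eqref{eq:minRate} of $\overline{I}=I_{\min}$ as a supremum over lower semicontinuous functions: it suffices to produce, for each such $f$, a function $g\in L(E)$ with $g(x)-\phi_{\overline{J}}(g)\ge f(x)-\overline{\phi}(f)$. Since $f\in\mathcal{H}$ is continuous, it is in particular lower semicontinuous, so the obvious candidate is $g=f$ itself — provided we can control $\phi_{\overline{J}}(f)$ by $\overline{\phi}(f)$.

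This is exactly the point where the representation theorems of Section \ref{sec:representation} enter, and it is the main obstacle: one needs $\phi_{\overline{J}}(f)\le\overline{\phi}(f)$ for $f\in\mathcal{H}$, which is not immediate because $\mathcal{H}$ consists of arbitrary continuous real-valued functions, not necessarily bounded above. When $\overline{\phi}(f)=\infty$ there is nothing to prove. When $\overline{\phi}(f)<\infty$, I would argue that $f\wedge n\in\bar C(E)$ for each $n$, so Theorem \ref{thm:repIII} (applied to the weakly maxitive $\overline{\phi}$) gives $\overline{\phi}(f\wedge n)=\phi_{\overline{J}}(f\wedge n)$; then monotonicity gives $\phi_{\overline{J}}(f\wedge n)=\overline{\phi}(f\wedge n)\le\overline{\phi}(f)$, and letting $n\to\infty$ with property (b5) yields $\phi_{\overline{J}}(f)=\lim_n\phi_{\overline{J}}(f\wedge n)\le\overline{\phi}(f)$. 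Now by Proposition \ref{prop:lowerBounds} (or directly since $f\in L(E)$ and $\overline{I}=I_{\min}$ satisfies \eqref{eq:LowerLP}) we have $\phi_{\overline{J}}(f)\ge\sup_{y\in E}\{f(y)-\overline{I}(y)\}\ge f(x)-\overline{I}(x)$, so combining, $f(x)-\overline{\phi}(f)\le f(x)-\phi_{\overline{J}}(f)\le\overline{I}(x)$. Taking the supremum over $f\in\mathcal{H}$ finishes the proof. One subtlety to check is the edge case $\overline{\phi}(f)=-\infty$ (so that $f(x)-\overline{\phi}(f)=\infty$): here $\overline{\phi}(f\wedge n)\le\overline{\phi}(f)=-\infty$ forces $\phi_{\overline{J}}(f\wedge n)=-\infty$ for all $n$, hence $\phi_{\overline{J}}(f)=-\infty$, which contradicts $\phi_{\overline{J}}(f)\ge f(x)-\overline{I}(x)$ unless $\overline{I}(x)=\infty$; in that case the inequality holds trivially, so the argument still goes through.
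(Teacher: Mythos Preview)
Your proof is correct and follows essentially the same route as the paper: for the inequality $\overline{I}(x)\le\underline{I}(x)$ the paper simply invokes the definitions (you spell this out via $\underline{J}\le\overline{J}$ and Proposition~\ref{prop:representationRate}), and for $\overline{\phi}_{\mathcal{H}}^\ast(x)\le\overline{I}(x)$ both you and the paper truncate $f$ to $f\wedge n$, apply Theorem~\ref{thm:repIII} and monotonicity to get $\phi_{\overline{J}}(f\wedge n)\le\overline{\phi}(f)$, pass to the limit via (b5), and then use that $f\in L(E)$ to bound $f(x)-\phi_{\overline{J}}(f)\le\overline{I}(x)$. Your additional handling of the edge cases $\overline{\phi}(f)=\pm\infty$ is extra care that the paper leaves implicit.
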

\begin{proof}
Given $x\in E$, we know from their respective definitions that $\overline{I}(x)\le \underline{I}(x)$. 
Fix now $f\in\mathcal{H}$ and $n\in\N$. 
Since $\overline{\phi}$ is weakly maxitive and $f\wedge n \in \bar{C}(E)$, by Theorem \ref{thm:repIII} we have
\[
{\phi}_{\overline{J}}(f\wedge n)=\overline{\phi}(f\wedge n).
\]  
Then, by monotonicity, we have
\[
{\phi}_{\overline{J}}(f\wedge n)=\overline{\phi}(f\wedge n)\le \overline{\phi}(f). 
\]
Due to (b5), letting $n\to \infty$ results in    
\[
{\phi}_{\overline{J}}(f)\le \overline{\phi}(f). 
\]
Thus, we have
\[
f(x) - \overline{\phi}(f) \le f(x) - {\phi}_{\overline{J}}(f) \le \overline{I}(x).
\]
Since $f\in\mathcal{H}$ was arbitrary, it follows that
\[
\overline{\phi}_{\mathcal{H}}^\ast(x)=\underset{f\in\mathcal{H}}\sup\{f(x)-\overline{\phi}(f)\}\le \overline{I}(x).
\]
The proof is complete.
\end{proof}

\begin{lemma}\label{lem:unifExposed}
Let $K\subset E$ be compact and $x\in E$ an $\mathcal{H}$-{exposed point} of $\overline{\phi}_{\mathcal{H}}^\ast$ with exposing function $f\in \mathcal{H}$. 
Then, for every open set $U\subset E$ such that $x\in U$ there exists an open set $W\subset E$  such that 
\begin{enumerate}
\item $K\cap U^c \subset W$,
\item $\underset{y\in {\rm cl}(W)}\sup\left\{f(y)-\overline{\phi}_{\mathcal{H}}^\ast(y)\right\}<f(x)-\overline{\phi}_{\mathcal{H}}^\ast(x)$.
\end{enumerate}

\end{lemma}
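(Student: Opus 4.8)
The plan is to exploit the strict inequality in the exposing condition together with compactness of $K \cap U^c$. First I would observe that the function $h := f - \overline{\phi}_{\mathcal{H}}^\ast$ is upper semicontinuous on $E$: indeed $f$ is continuous, and $\overline{\phi}_{\mathcal{H}}^\ast$ is a supremum of continuous functions $y \mapsto f'(y) - \overline{\phi}(f')$ over $f' \in \mathcal{H}$, hence lower semicontinuous (with values in $(-\infty,\infty]$, so that $-\overline{\phi}_{\mathcal{H}}^\ast$ is upper semicontinuous). Set $\alpha := f(x) - \overline{\phi}_{\mathcal{H}}^\ast(x) = h(x)$, which is finite by the remark preceding the lemma. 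The exposing condition says precisely that $h(y) < \alpha$ for every $y \neq x$, in particular for every $y \in K \cap U^c$ (note $x \notin U^c$ since $x \in U$).

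Next I would pass from the pointwise strict inequality on the compact set $K \cap U^c$ to a uniform one. By upper semicontinuity of $h$, for each $y \in K \cap U^c$ the set $\{h < \alpha\}$ is an open neighbourhood of $y$; however, to get a \emph{uniform} gap I would instead argue via the sublevel sets directly. For each rational (or each) $\beta < \alpha$ the set $V_\beta := \{y \in E : h(y) < \beta\}$ is open; since $h(y) < \alpha$ on the compact set $K \cap U^c$, we have $K \cap U^c \subset \bigcup_{\beta < \alpha} V_\beta$, and by compactness finitely many suffice, i.e. $K \cap U^c \subset V_{\beta_0}$ for some $\beta_0 < \alpha$ (taking $\beta_0$ the largest of the finitely many chosen $\beta$'s, using that the $V_\beta$ are nested increasing in $\beta$). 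Then set $W := V_{\beta_0} = \{h < \beta_0\}$, which is open and contains $K \cap U^c$, giving (1). For (2), note that $\{h \le \beta_0\}$ is closed (again by upper semicontinuity of $h$) and contains $W$, hence contains $\mathrm{cl}(W)$; therefore $\sup_{y \in \mathrm{cl}(W)} h(y) \le \beta_0 < \alpha = f(x) - \overline{\phi}_{\mathcal{H}}^\ast(x)$, which is exactly (2).

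The main subtlety — and the only place any care is required — is the semicontinuity bookkeeping: one must be sure that $\overline{\phi}_{\mathcal{H}}^\ast$ is genuinely lower semicontinuous as a supremum of continuous functions (so $h$ is u.s.c. and its sublevel sets $\{h < \beta\}$ are open while $\{h \le \beta\}$ is closed), and that $\alpha$ is finite so that a strictly smaller real $\beta_0$ exists. Both are immediate from the definitions and the stated remark. Everything else is a routine compactness argument, and no use of tightness, weak maxitivity, or the representation theorems is needed for this particular lemma.
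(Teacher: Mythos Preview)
Your proof is correct and follows the same core idea as the paper's proof: use upper semicontinuity of $h=f-\overline{\phi}_{\mathcal{H}}^\ast$ together with compactness of $K\cap U^c$ to turn the pointwise strict inequality $h(y)<h(x)$ into a uniform gap. The paper reaches the uniform gap via a net-based contradiction argument (assuming no $\varepsilon$ works and extracting a convergent subnet in $K\cap U^c$), while you do it directly by covering $K\cap U^c$ with the nested open sets $V_\beta=\{h<\beta\}$ and extracting a finite subcover.

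There is one genuine simplification in your version worth flagging. The paper, after finding $K\cap U^c\subset V_\varepsilon$, invokes the regularity of $E$ to shrink to an open $W$ with $K\cap U^c\subset W\subset{\rm cl}(W)\subset V_\varepsilon$. You bypass regularity entirely by taking $W=V_{\beta_0}=\{h<\beta_0\}$ itself and observing that ${\rm cl}(W)\subset\{h\le\beta_0\}$ because the latter is closed by upper semicontinuity of $h$; this immediately gives $\sup_{{\rm cl}(W)}h\le\beta_0<\alpha$. So your argument shows that this particular lemma does not actually require the standing hypothesis that $E$ is regular, only that it is Hausdorff (so that $K\cap U^c$ is compact).
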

\begin{proof}
For each $\varepsilon>0$, define
\[
V_\varepsilon:=\big\{y\in E \colon f(y)-\overline{\phi}_{\mathcal{H}}^\ast(y)+\varepsilon < f(x)-\overline{\phi}_{\mathcal{H}}^\ast(x) \big\}.
\]
Since the mapping $y\mapsto f(y)-\overline{\phi}_{\mathcal{H}}^\ast(y)$ is upper semicontinuous, we have that $V_\varepsilon$ is open.   
We claim that there exists $\varepsilon>0$ such that $K\cap U^c\subset V_\varepsilon$.  
Indeed, by contradiction assume that for every $\varepsilon>0$ we can pick up $y_\varepsilon\in K\cap U^c$ such that
\[
 f(y_\varepsilon)-\overline{\phi}_{\mathcal{H}}^\ast(y_\varepsilon)+\varepsilon\ge f(x)-\overline{\phi}_{\mathcal{H}}^\ast(x).
\] 
Then, $(y_\varepsilon)_{\varepsilon>0}$ is a net in the compact set $K$.\footnote{Here, $\{\varepsilon\colon \varepsilon>0\}$ is regarded as downwards directed set.} 
We can take a subnet $(y_{\varepsilon_\alpha})$ such that $y_{\varepsilon_\alpha}\to y\in K\cap U^c $. 
Taking the limsup   on $\alpha$, and using that $y\mapsto f(y)-\overline{\phi}_{\mathcal{H}}^\ast(y)$ is upper semicontinuous, we get
\[
 f(y)-\overline{\phi}_{\mathcal{H}}^\ast(y) \ge f(x)-\overline{\phi}_{\mathcal{H}}^\ast(x).
\]  
Besides, we have that $y\neq x$ as $y\in U^c$ and $x\in U$.  
This contradicts that $x$ is an $\mathcal{H}$-exposed point. 
We have that $K\cap U^c$ is a compact set contained in the open set $V_\varepsilon$. 
 Since $E$ regular, we can find an open set $W\subset E$ such that $K\cap U^c \subset{\rm cl}(W)\subset V_\varepsilon$. 
Finally, 
the set $W$ meets the required conditions.   
\end{proof}

\begin{proposition}\label{eq:concentrationExp}
Suppose that $x\in\mathscr{E}$ and $f\in\mathcal{H}$ is an exposing function for $x$ satisfying~\eqref{eq:niceExp}. 
If $\overline{J}$ is tight, then for every open set $U\subset E$ with $x\in U$ it holds  
\[
\overline{\phi}(f 1_U - \infty 1_{U^c})=\overline{\phi}(f)=\underline{\phi}(f)=\underline{\phi}(f 1_U - \infty 1_{U^c}).
\]
\end{proposition}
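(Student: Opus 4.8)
The plan is to derive all three non-trivial identities from the two inequalities
\[
\overline{\phi}(f)\le\overline{\phi}(f1_U-\infty 1_{U^c}),\qquad \underline{\phi}(f)\le\underline{\phi}(f1_U-\infty 1_{U^c}),
\]
since the reverse inequalities are immediate from monotonicity ($f1_U-\infty 1_{U^c}\le f$), the middle equality $\overline{\phi}(f)=\underline{\phi}(f)$ is exactly the first part of \eqref{eq:niceExp}, and chaining then yields the claim. Put $a:=f(x)-\overline{\phi}_{\mathcal{H}}^\ast(x)$. As $f\in\mathcal{H}$, the definition of the conjugate gives $\overline{\phi}(f)\ge a$. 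Decomposing $f=(f\wedge m)\vee\big(f1_{\{f>m\}}-\infty 1_{\{f\le m\}}\big)$ with both functions on the right lower semicontinuous and $f\in{U}(E)$, the weak maxitivity of $\overline{\phi}$ together with $\overline{\phi}(f\wedge m)\le m$ and Lemma~\ref{lem:tail} give $\overline{\phi}(f)<\infty$ for $m$ large; since $x$ is $\mathcal{H}$-exposed one has $\overline{\phi}_{\mathcal{H}}^\ast(x)<\infty$, so $a>-\infty$, and $\overline{\phi}_{\mathcal{H}}^\ast(x)\ge f(x)-\overline{\phi}(f)$ gives $a\le\overline{\phi}(f)<\infty$; thus $a\in\R$.

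The core step is to dominate $f$ pointwise, for a fixed open $U\ni x$, by finitely many lower semicontinuous functions, one being $f1_U-\infty 1_{U^c}$ and all the others having $\overline{\phi}$-value $<a$. Using Lemma~\ref{lem:tail}, fix $m$ so large that $\overline{\phi}\big(f1_{\{f>m\}}-\infty 1_{\{f\le m\}}\big)<a$. Using tightness of $\overline{J}$, fix a compact set $K$ with $\overline{J}_{K^c}<a-m$, so that $\overline{\phi}(m1_{K^c}-\infty 1_K)=m+\overline{J}_{K^c}<a$ by translation invariance. The set $K':=K\cap U^c\cap\{f\le m\}$ is compact and does not contain $x$. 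For each $y\in K'$ we have $y\neq x$, so the exposing inequality gives $\delta_y:=a-\big(f(y)-\overline{\phi}_{\mathcal{H}}^\ast(y)\big)>0$; by Proposition~\ref{prop:representationRate} and the bound $\overline{\phi}_{\mathcal{H}}^\ast(y)\le\overline{I}(y)$ from Lemma~\ref{lem:rateMinKappa}, there is an open neighbourhood $W_y\ni y$ with $\overline{J}_{W_y}<-\overline{\phi}_{\mathcal{H}}^\ast(y)+\delta_y/2$, and after intersecting it with a neighbourhood of $y$ on which $f<f(y)+\delta_y/2$ (continuity of $f$), the lower semicontinuous function $g_y:=(f(y)+\delta_y/2)1_{W_y}-\infty 1_{W_y^c}$ satisfies $g_y\ge f$ on $W_y$ and, by translation invariance, $\overline{\phi}(g_y)=f(y)+\delta_y/2+\overline{J}_{W_y}<a$. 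Taking a finite subcover $W_{y_1},\dots,W_{y_N}$ of $K'$ and inspecting the four possibilities for a point $z$ ($z\in U$; $z\in U^c\cap\{f>m\}$; $z\in U^c\cap\{f\le m\}\cap K^c$; $z\in K'$) one obtains
\[
f\ \le\ (f1_U-\infty 1_{U^c})\ \vee\ (m1_{K^c}-\infty 1_K)\ \vee\ \big(f1_{\{f>m\}}-\infty 1_{\{f\le m\}}\big)\ \vee\ \bigvee_{i=1}^N g_{y_i},
\]
with $f\in{U}(E)$ and every function on the right in ${L}(E)$.

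Now apply part~(2) of Assumption~\ref{ass:jointMax} (weak maxitivity of $\overline{\phi}$) to this domination: $\overline{\phi}(f)$ is then at most the maximum of $\overline{\phi}(f1_U-\infty 1_{U^c})$ and finitely many quantities each strictly below $a$; since $\overline{\phi}(f)\ge a$, this forces $\overline{\phi}(f)\le\overline{\phi}(f1_U-\infty 1_{U^c})$, hence equality. Applying instead part~(3) of Assumption~\ref{ass:jointMax} to the same domination, with $f1_U-\infty 1_{U^c}$ in the role of the distinguished function $g_1$ and all other members controlled through $\overline{\phi}$, and using $\underline{\phi}(f)=\overline{\phi}(f)\ge a$, one gets in the same way $\underline{\phi}(f)\le\underline{\phi}(f1_U-\infty 1_{U^c})$, hence equality. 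Combining yields $\overline{\phi}(f1_U-\infty 1_{U^c})=\overline{\phi}(f)=\underline{\phi}(f)=\underline{\phi}(f1_U-\infty 1_{U^c})$.

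I expect the main obstacle to be the construction of the finite lower semicontinuous cover: the local pieces $g_y$ must be kept close enough to $f$ near each $y\in K'$ that no mass is lost, while the neighbourhoods $W_y$ must be small enough that $\overline{J}_{W_y}$ absorbs the term $-\overline{\phi}_{\mathcal{H}}^\ast(y)$ (this is where Proposition~\ref{prop:representationRate}, Lemma~\ref{lem:rateMinKappa} and the exposing property work together), and separately the unbounded-above tail of $f$ and the non-compactness of $E$ must be handled via Lemma~\ref{lem:tail} and tightness, respectively. The remaining points — that each auxiliary function is lower semicontinuous and Borel and that the displayed pointwise inequality indeed holds in each of the four cases — are routine verifications.
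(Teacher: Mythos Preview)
Your proof is correct and takes a genuinely different route from the paper's. The paper first invokes Lemma~\ref{lem:unifExposed} to cover $K\cap U^c$ by a \emph{single} open set $W$ whose closure satisfies $\sup_{y\in{\rm cl}(W)}\{f(y)-\overline{\phi}_{\mathcal{H}}^\ast(y)\}<a$, and then bounds $\overline{\phi}(f1_W-\infty 1_{W^c})$ through the convex-integral representation (Theorem~\ref{thm:repII}) together with the upper duality bound for $\overline{I}$ (Proposition~\ref{prop:Bryc}); it also keeps a parameter $n$ free and passes to the limit at the end. You instead fix $m$ and $K$ once and for all, cover the compact set $K'=K\cap U^c\cap\{f\le m\}$ by finitely many small neighbourhoods $W_{y_i}$, and bound each constant-piece $\overline{\phi}(g_{y_i})$ directly from Proposition~\ref{prop:representationRate} plus the exposing inequality. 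Your approach is more elementary---it bypasses Lemma~\ref{lem:unifExposed}, Theorem~\ref{thm:repII}, and Proposition~\ref{prop:Bryc} entirely---at the price of a longer finite-cover bookkeeping; the paper's route is shorter once that machinery is in place. One small patch you should make: your formulas for $W_y$ and $g_y$ break down when $\overline{\phi}_{\mathcal{H}}^\ast(y)=\infty$ (then $\delta_y=\infty$ and $g_y$ takes the value $+\infty$ on $W_y$, so $g_y\notin B(E)$); replacing $\delta_y/2$ by $\tfrac12(\delta_y\wedge 1)$ throughout fixes this without affecting any of the inequalities.
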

\begin{proof}
Let $f\in \mathcal{H}$ be an exposing function for $x$ satisfying~\eqref{eq:niceExp}. 
Fix $n\in\N$.  
Since $\overline{J}$ is tight, there exists a compact set $K\subset E$ such that 
\begin{equation}\label{eq:tightness}
\overline{J}_{K^c}<-2 n. 
\end{equation}
Due to Lemma~\ref{lem:unifExposed}, we can find an open set $W\subset E$ such that  
\begin{equation}
\label{eq:unifExposed}
K\cap U^c\subset W,\quad 
\underset{y\in {\rm cl}(W)}\sup\left\{f(y)-\overline{\phi}_{\mathcal{H}}^\ast(y)\right\}<f(x)-\overline{\phi}_{\mathcal{H}}^\ast(x).
\end{equation}
Since 
\begin{align*}
E&=K\cup K^c\\
&\subset  (K\cap U)\cup (K\cap U^c)\cup \Big(K^c\cap\{f < n+1\}\Big)\cup \Big(K^c\cap\{f > n\}\Big)\\
&\subset   U \cup W \cup \Big(K^c\cap\{f < n+1\}\Big)\cup \{f > n\},
\end{align*}
 due to Assumption \ref{ass:jointMax} we have
\begin{align}
\overline{\phi}(f)&=\underline{\phi}(f)\nonumber\\
&\le  \underline{\phi}\left(f 1_U - \infty 1_{U^c}\right)
\vee \overline{\phi}\left(f 1_W - \infty 1_{W^c}\right)\vee \overline{\phi}\left((n+1) 1_{K^c} - \infty 1_K\right)
\vee \overline{\phi}\left(f 1_{\{f > n\}} - \infty 1_{\{f\le n\}}\right).
 \label{eq:mainIne}
\end{align}
In addition, by the definition of $\overline{\phi}^\ast_{\mathcal{H}}$ we have 
\begin{equation}\label{eq:ineBelow}
\overline{\phi}(f)\ge f(x)-\overline{\phi}^\ast_{\mathcal{H}}(x).
\end{equation}
On the other hand, $\overline{J}$ is weakly maxitive and tight. 
Then, due to Theorem~\ref{thm:repII} and Proposition~\ref{prop:Bryc} we have
\begin{align*}
\overline{\phi}\left(f 1_{W} -\infty 1_{W^c}\right)&\le \overline{\phi}\left(f 1_{{\rm cl}(W)} - \infty 1_{{\rm cl}(W)^c} \right)\\
&=\phi_{\overline{J}}\left(f 1_{{\rm cl}(W)} - \infty 1_{{\rm cl}(W)^c} \right)\\
&\le \underset{y\in {\rm cl}(W)}\sup\{f(y)-\overline{I}(y)\}\\
&\le \underset{y\in {\rm cl}(W)}\sup\{f(y)-\overline{\phi}^\ast_{\mathcal{H}}(y)\}\\
&<f(x)-\overline{\phi}^\ast_{\mathcal{H}}(x),
\end{align*}
where we have used that $\overline{\phi}^\ast_{\mathcal{H}}(y)\le \overline{I}(y)$ due to Lemma \ref{lem:rateMinKappa} in the second inequality and \eqref{eq:unifExposed} in the third inequality. 
The last inequality is strict, then, in view of \eqref{eq:ineBelow}, we can drop the second member of the maximum in \eqref{eq:mainIne}, obtaining
\begin{equation}\label{eq:mainIne2}
\overline{\phi}(f)\le  \underline{\phi}\left(f 1_U - \infty 1_{U^c}\right)
\vee  \overline{\phi}\left((n+1) 1_{K^c} - \infty 1_K\right)
\vee \overline{\phi}\left(f 1_{\{f > n\}} - \infty 1_{\{f\le n\}}\right).
\end{equation}
By monotonicity and translation invariance, we get
\begin{align*}
\overline{\phi}\left((n+1) 1_{K^c} - \infty 1_K\right) &=\overline{\phi}\left(- \infty 1_K\right)+n+1\\
&= \overline{J}_{K^c} + n+1\\
& \le -2 n + n + 1=-n + 1,
\end{align*}
where we have used \eqref{eq:tightness} in the last inequality. Therefore,
\begin{equation}
\label{eq:limitI}
\underset{n\to\infty}\lim \overline{\phi}\left((n+1) 1_{K^c} - \infty 1_K\right)=-\infty.
\end{equation}
On the other hand,  $f\in B_{\overline{\phi}}(E)$ by \eqref{eq:niceExp}. Then, applying Lemma~\ref{lem:tail} we have 
\begin{equation}
\label{eq:limitII}
\lim_{n\to\infty}\overline{\phi}\left(f 1_{\{f > n\}} - \infty 1_{\{f\le n\}}\right)=-\infty.
\end{equation}
Consequently, letting $n\to\infty$ in \eqref{eq:mainIne2} results in
\[
\overline{\phi}(f)\le \underline{\phi}\left(f 1_U - \infty 1_{U^c}\right).
\] 
Finally, by monotonicity we have 
$$\underline{\phi}\left(f 1_U - \infty 1_{U^c}\right)\le \overline{\phi}\left(f 1_U - \infty 1_{U^c}\right)\le  \overline{\phi}(f)\le \underline{\phi}\left(f 1_U - \infty 1_{U^c}\right),$$ 
and the desired equalities follow. 
\end{proof}

We now turn to the proof of Theorem \ref{thm:main}.

\begin{proof} 
Let $x\in\mathscr{E}$, and take an exposing function $f\in\mathcal{H}$ for $x$ satisfying  \eqref{eq:niceExp}.    
Fix an open neighborhood $U$ of $x$ and $\varepsilon>0$.   
Since $f$ is upper semicontinuous, we can find an open neighborhood $V\subset  U$ of $x$ such that 
\begin{equation}\label{eq:continuity}
f(y)<f(x)+\varepsilon\quad\mbox{ for all }y\in V.
\end{equation}
Due to Proposition~\ref{eq:concentrationExp},  
\begin{align*}
\overline{\phi}(f)&=\underline{\phi}(f 1_{V} - \infty 1_{V^c})\\
&\le \underline{\phi}\big((f(x)+\varepsilon)1_V - \infty 1_{V^c}\big) 
=\underline{J}_V + f(x)+\varepsilon\le \underline{J}_U + f(x)+\varepsilon.  
\end{align*}
On the other hand, from the definition of $\overline{\phi}^\ast_{\mathcal{H}}$ we have that 
\[
\overline{\phi}(f)\ge f(x)-\overline{\phi}^\ast_{\mathcal{H}}(x).
\]
Combining both things, we get
\[
-\overline{\phi}^\ast_{\mathcal{H}}(x) \le \underline{J}_U + \varepsilon. 
\] 
Letting $\varepsilon \downarrow 0$, we obtain
\[
-\overline{\phi}^\ast_{\mathcal{H}}(x) \le \underline{J}_U. 
\] 
Since $U$ was arbitrary, it follows from Proposition~\ref{prop:representationRate} that
\[
\overline{\phi}^\ast_{\mathcal{H}}(x) \ge \underline{I}(x).
\]  
Finally, we have by Lemma~\ref{lem:rateMinKappa} that $\overline{\phi}^\ast_{\mathcal{H}}(x)\le\overline{I}(x)\le \underline{I}(x)$, obtaining (i).

 Suppose that $C\subset E$ is closed. 
Since $\overline{J}$ is tight and weakly maxitive, by Proposition~\ref{prop:Bryc} we have that
\[
\overline{J}_C\le -\underset{x\in C}\inf \overline{I}(x) \le -\underset{x\in C}\inf \overline{\phi}^\ast_{\mathcal{H}}(x),
\]
where we have used that $\overline{\phi}^\ast_{\mathcal{H}}(x) \le \overline{I}(x)$. This proves (ii).  

Suppose now that $O\subset E$ is open. 
It follows from Proposition \ref{prop:lowerBounds} that
\begin{align*}
\underline{J}_O &\ge -\underset{x\in O}\inf \underline{I}(x)\\
&\ge -\underset{x\in O\cap\mathscr{E}}\inf \underline{I}(x)\\ 
&= -\underset{x\in O\cap\mathscr{E}}\inf \overline{\phi}^\ast_{\mathcal{H}}(x) , 
\end{align*} 
where we have used that $\underline{I}(x)=\overline{\phi}^\ast_{\mathcal{H}}(x) $ for all $x\in\mathscr{E}$ by Proposition (i).~Then, (iii) follows. 

Finally, we obtain that the pair $\underline{\phi},\overline{\phi}$ satisfies the LDP with rate function $\overline{\phi}^\ast_{\mathcal{H}}$ as a consequence of (ii) and (iii) taking into account \eqref{eq:condition}.  
In turn, due to Proposition~\ref{prop:LDPimpliesLP} the pair $\underline{\phi},\overline{\phi}$ also satisfies the LP with rate function $\overline{\phi}^\ast_{\mathcal{H}}$. 
\end{proof}

\begin{examples}\label{examples:linear}
\mbox{}\newline\vspace{-0.5cm}
\begin{enumerate}
 \item \textbf{Topological vector spaces}: Every topological vector space is regular. 
 Then, Theorem \ref{thm:main} applies to (Hausdorff) topological  vector spaces. 
 In that case, we can consider $\mathcal{H}:=E^\ast$, the set of all linear continuous real-valued functions on $E$. 
 Define $\Lambda:=\overline{\phi}|_{E^\ast}$.  
 Since $\overline{\phi}$ is weakly maxitive due to Assumption \ref{ass:jointMax}, we have that $\overline{\phi}$ is maxitive on the set $C(E)$. 
It follows from \cite[Proposition 2.1]{kupper} that $\overline{\phi}$ is convex on $C(E)$ and, in particular, $\Lambda$ is a convex function.  
 Then, we have that $\Lambda^\ast:=\phi^\ast_{E^\ast}$ is the convex conjugate of $\Lambda$.     
 \item \textbf{Finite dimension}:
Suppose now that $E=\R^d$ and $\mathcal{H}=(\R^d)^\ast=\R^d$.  
Consider the conditions below.  
\begin{enumerate}
\item $\Lambda(y)=\underline{\phi}(y)=\overline{\phi}(y)$ for $y\in \R^d$,
\item $0$ belongs to the topological interior of $\{y\in\R^d\colon \overline{\phi}(y)<\infty\}$,
\item $\Lambda$ is lower semicontinuous,
\item $\Lambda$ is \emph{essentially smooth} in the sense of \cite[Definition 2.3.5]{dembo}.
\end{enumerate}
Then, under the conditions (a)--(c) above, the condition \eqref{eq:condition}  in Theorem \ref{thm:main} holds. 
This is proved by following word by word the argumentation in the proof of (c) in \cite[Theorem 2.3.6]{dembo}. 
We conclude that, in the present situation, we can replace \eqref{eq:condition} in Theorem \ref{thm:main} by the conditions (a)--(c) above.

\item \textbf{Exposing families of functions}: A family $(f_a)_{a\in E}$ of functions in $C_{\overline{\phi}}$ is said to be an \emph{exposing family} for $\underline{\phi},\overline{\phi}$ if for every $x\in E$ we have $\overline{\phi}(f_x)=\underline{\phi}(f_x)=0$ and
$$f_x(y)<\underset{a\in E}\sup f_a(y)\quad\mbox{ for all }y\neq x.$$ 
If $\overline{J}$ is tight and $\mathcal{H}=\{f_a\colon a\in E\}$ for an exposing family $(f_a)_{a\in E}$, then $\mathscr{E}=E$, 
\[
\overline{I}(x)=\underline{I}(x)=\overline{\phi}^\ast_{\mathcal{H}}(x)=\underset{a\in E}\sup f_a(x)\quad\mbox{ for all }x\in E,
\]
and the pair $\underline{\phi},\overline{\phi}$ satisfies the LDP and LP with rate function $\overline{\phi}^\ast_{\mathcal{H}}$. 
This can be proven as follows. 
First, since $\overline{\phi}(f_a)=0$ for all $a\in E$, it follows that
\[
\overline{\phi}^\ast_{\mathcal{H}}(x)=\sup_{a\in E} f_a(x)\quad\mbox{for all }x\in E.
\]
Then, given $x\in E$, for each $y\neq x$ we have
\[
\overline{\phi}^\ast_{\mathcal{H}}(y)-f_x(y)=\sup_{a\in E} f_a(y)-f_x(y)>0.
\]
On the other hand, 
\[
\overline{\phi}^\ast_{\mathcal{H}}(x)-f_x(x)=\sup_{a\in E} f_a(x)-f_x(x)\le 0.
\]
It follows that $x$ is $\mathcal{H}$-exposed. 
Since $x$ was arbitrary, we conclude that $\mathscr{E}=E$.  
Thus, the condition \eqref{eq:condition} is trivially satisfied and we get the conclusions as a consequence of Theorem~\ref{thm:main}. 
\end{enumerate}
\end{examples}

\section{Large deviation principle for sequences of sublinear expectations}\label{sec:LDsublinearExp}
We finally apply Theorem \ref{thm:main} to study large deviations for sequences of sublinear expectations.  
Denote by ${B}_+(E)$ the set of all Borel measurable functions $f\colon E\to [0,\infty)$.   
A function $\mathcal{E}\colon {B}_+(E)\to [0,\infty]$ is called a \emph{sublinear} expectation if:
\begin{enumerate}
\item $\mathcal{E}(c)=c$ for all constant $c\ge 0$,
\item $\mathcal{E}(f)\le\mathcal{E}(g)$ whenever $f\le g$,
\item $\mathcal{E}(f+g)\le \mathcal{E}(f)+\mathcal{E}(g)$,
\item $\mathcal{E}(a f)=a\mathcal{E}(f)$ for all constant $a\ge 0$.
\end{enumerate}
A functional which satisfies the properties (1)-(4) is also called upper expectation in robust statistics~\cite{huber}, upper coherent prevision in the theory of imprecise probabilities \cite{walley}, or (up to a sign change) coherent risk measure in mathematical finance \cite{delbaen}.
\begin{example}
Suppose that $X$ is an $E$-valued random variable defined on a probability space $(\Omega,\mathcal{F},\mathbb{P})$. 
Consider a nonempty set $\mathcal{P}$ of probability measures on $\mathcal{F}$. 
Then, the mapping $\mathcal{E}\colon {B}_+(E)\to [0,\infty]$  given by
\[
\mathcal{E}(f):=\underset{Q\in\mathcal{P}}\sup\mathbb{E}_Q[f(X)]
\] 
is a sublinear expectation.
\end{example}

In the following, we consider a sequence $(\mathcal{E}_n)_{n\in\N}$ of sublinear expectations. 
We define the lower/upper \emph{asymptotic entropies} $\underline{\psi},\overline{\psi}\colon B(E)\to\overline{\R}$ as
\[
\underline{\psi}(f):=\underset{n\to\infty}\liminf \frac{1}{n}\log\mathcal{E}_n(e^{n f}),
\quad
\overline{\psi}(f):=\underset{n\to\infty}\limsup \frac{1}{n}\log\mathcal{E}_n(e^{n f}).
\]
Straightforward inspection shows that $\underline{\psi}$ and $\overline{\psi}$ are monetary risk measures. 
The following lemma is well-known in large deviations theory and often referred to as the principle of the largest term; see e.g.~\cite[Lemma 1.2.15]{dembo} for \eqref{eq:largestTermUp} and \cite[Exercise
14.8]{rassoul} for \eqref{eq:largestTermLow}. 
\begin{lemma}\label{lem:largestTerm}
Suppose that $(a_n^1)_{n\in\N},(a_n^2)_{n\in\N},\ldots,(a_n^N)_{n\in\N}$ are $[0,\infty]$-valued sequences, then
\begin{equation}\label{eq:largestTermUp}
\underset{n\to\infty}\limsup \frac{1}{n}\log \sum_{i=1}^N a_n^i\le \vee^N_{i=1} \underset{n\to\infty}\limsup \frac{1}{n}\log a_n^i,
\end{equation}
\begin{equation}\label{eq:largestTermLow}
\underset{n\to\infty}\liminf \frac{1}{n}\log \sum_{i=1}^N a_n^i\le  \Big(\underset{n\to\infty}\liminf \frac{1}{n}\log a_n^1\Big) \vee\Big(\vee_{i=2}^N \underset{n\to\infty}\limsup \frac{1}{n}\log a_n^i\Big).   
\end{equation}
\end{lemma}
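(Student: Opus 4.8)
The plan is to reduce both inequalities to the case $N=2$ and then exploit that $\log$ is increasing together with the elementary bounds $a+b\le 2(a\vee b)$ and $\tfrac1n\log 2\to 0$. For \eqref{eq:largestTermUp}, I would first note that if any $\limsup_n \tfrac1n\log a_n^i=+\infty$ the right-hand side is $+\infty$ and there is nothing to prove, so we may assume all these $\limsup$'s are finite (and they are $>-\infty$ only matters trivially). For the $N=2$ case, write
\[
\frac1n\log(a_n^1+a_n^2)\le \frac1n\log\bigl(2(a_n^1\vee a_n^2)\bigr)=\frac{\log 2}{n}+\frac1n\log(a_n^1\vee a_n^2)=\frac{\log 2}{n}+\Bigl(\frac1n\log a_n^1\Bigr)\vee\Bigl(\frac1n\log a_n^2\Bigr),
\]
using that $\log$ is increasing and monotone on products/maxima, with the convention $\log 0=-\infty$. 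Taking $\limsup_{n\to\infty}$, the first term vanishes and $\limsup$ is subadditive with respect to the maximum, i.e. $\limsup_n (u_n\vee v_n)\le (\limsup_n u_n)\vee(\limsup_n v_n)$; this gives the claim for $N=2$. The general $N$ follows by an immediate induction: $\sum_{i=1}^{N}a_n^i=\bigl(\sum_{i=1}^{N-1}a_n^i\bigr)+a_n^N$, apply the $N=2$ bound, and then the inductive hypothesis to the first summand, noting $\vee$ is associative.

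For \eqref{eq:largestTermLow} the same splitting is used but one must be careful to keep the first sequence $a_n^1$ isolated so that only a $\liminf$ appears on it. Again it suffices to treat $N=2$ (for general $N$, group $\sum_{i=1}^N a_n^i = a_n^1 + \sum_{i=2}^N a_n^i$, bound $\sum_{i=2}^N a_n^i$ above by $(N-1)\bigvee_{i=2}^N a_n^i$, and then treat it as a single ``second term''). For $N=2$,
\[
\frac1n\log(a_n^1+a_n^2)\le \frac{\log 2}{n}+\Bigl(\frac1n\log a_n^1\Bigr)\vee\Bigl(\frac1n\log a_n^2\Bigr),
\]
and now I take $\liminf_{n\to\infty}$ on the left. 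The key inequality is that for any two real sequences $(u_n),(v_n)$ one has $\liminf_n (u_n\vee v_n)\le (\liminf_n u_n)\vee(\limsup_n v_n)$: indeed, along a subsequence realizing $\liminf_n u_n$ one has $u_n\vee v_n$ eventually bounded by $\max\{u_n, \sup_{m\ge n}v_m\}$, whose subsequential limit is at most $(\liminf_n u_n)\vee(\limsup_n v_n)$. Applying this with $u_n=\tfrac1n\log a_n^1$ and $v_n=\tfrac1n\log a_n^2$, and absorbing the vanishing $\tfrac{\log 2}{n}$, yields exactly \eqref{eq:largestTermLow} for $N=2$.

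The only genuinely delicate point—hardly an obstacle, but the one place to be precise—is the asymmetric $\liminf$/$\limsup$ inequality $\liminf_n(u_n\vee v_n)\le(\liminf_n u_n)\vee(\limsup_n v_n)$ in the second part: one must not naively replace it by $(\liminf u_n)\vee(\liminf v_n)$, which is false, nor by $\limsup$ on both, which would be weaker than claimed. Handling the possibility of $\pm\infty$ values (e.g. $a_n^i=0$ for some $n$, giving $-\infty$, or $a_n^i=\infty$) is routine once one adopts the usual conventions for $\log$ on $[0,\infty]$ and notes that these extended-real cases make the stated inequalities only easier.
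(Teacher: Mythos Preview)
Your proof is correct. The paper itself does not prove this lemma; it merely cites \cite[Lemma 1.2.15]{dembo} for \eqref{eq:largestTermUp} and \cite[Exercise 14.8]{rassoul} for \eqref{eq:largestTermLow} as standard references. Your argument---bounding $\sum_i a_n^i \le N\max_i a_n^i$, using $\tfrac1n\log N\to 0$, and for the second inequality invoking the asymmetric bound $\liminf_n(u_n\vee v_n)\le(\liminf_n u_n)\vee(\limsup_n v_n)$ established by passing to a subsequence realizing $\liminf_n u_n$---is exactly the standard approach those references take.
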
  

As a consequence, we have. 
\begin{lemma}\label{lem:joinMax}
$\underline{\psi},\overline{\psi}$ satisfy Assumption \ref{ass:jointMax}.
\end{lemma}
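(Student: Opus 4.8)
The plan is to verify the three parts of Assumption~\ref{ass:jointMax} for the pair $\underline{\psi},\overline{\psi}$, using the principle of the largest term (Lemma~\ref{lem:largestTerm}) as the main tool. Throughout, one uses the convention $-\infty\cdot 0=0$ and the elementary fact that for $f\colon E\to\R\cup\{-\infty\}$ one has $e^{nf}$ well-defined as a $[0,\infty)$-valued Borel function (with $e^{-\infty}=0$), so that $\mathcal{E}_n(e^{nf})$ makes sense; also $\log 0=-\infty$.

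First, part~(1) is immediate: since $a\mapsto\liminf_n a_n\le\limsup_n a_n$ for any real sequence, applying this to $a_n=\tfrac1n\log\mathcal{E}_n(e^{nf})$ gives $\underline{\psi}(f)\le\overline{\psi}(f)$ for every $f\in B(E)$.

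Second, part~(2), the weak maxitivity of $\overline{\psi}$: let $f\in U(E)$, $g_1,\ldots,g_N\in L(E)$ with $f\le\vee_{i=1}^N g_i$. Then $e^{nf}\le\vee_{i=1}^N e^{ng_i}\le\sum_{i=1}^N e^{ng_i}$, so by monotonicity and subadditivity of $\mathcal{E}_n$ we get $\mathcal{E}_n(e^{nf})\le\sum_{i=1}^N\mathcal{E}_n(e^{ng_i})$. Taking $\tfrac1n\log(\cdot)$ and applying~\eqref{eq:largestTermUp} with $a_n^i=\mathcal{E}_n(e^{ng_i})$ yields
\[
\overline{\psi}(f)=\limsup_{n\to\infty}\frac1n\log\mathcal{E}_n(e^{nf})\le\bigvee_{i=1}^N\limsup_{n\to\infty}\frac1n\log\mathcal{E}_n(e^{ng_i})=\bigvee_{i=1}^N\overline{\psi}(g_i).
\]
Note this argument uses only monotonicity and subadditivity of $\mathcal{E}_n$, not the semicontinuity of $f,g_i$; the semicontinuity hypotheses in the definition of weak maxitivity are simply not needed for the inequality to hold for these particular functionals, which is fine.

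Third, part~(3): with the same setup $f\le\vee_{i=1}^N g_i$, the same pointwise bound and monotonicity/subadditivity give $\mathcal{E}_n(e^{nf})\le\sum_{i=1}^N\mathcal{E}_n(e^{ng_i})$, and now one applies~\eqref{eq:largestTermLow} with $a_n^1=\mathcal{E}_n(e^{ng_1})$ and $a_n^i=\mathcal{E}_n(e^{ng_i})$ for $i\ge 2$:
\[
\underline{\psi}(f)=\liminf_{n\to\infty}\frac1n\log\mathcal{E}_n(e^{nf})\le\Big(\liminf_{n\to\infty}\frac1n\log\mathcal{E}_n(e^{ng_1})\Big)\vee\Big(\bigvee_{i=2}^N\limsup_{n\to\infty}\frac1n\log\mathcal{E}_n(e^{ng_i})\Big)=\underline{\psi}(g_1)\vee\Big(\bigvee_{i=2}^N\overline{\psi}(g_i)\Big).
\]
This is precisely condition~(3). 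The only mild subtlety — and the closest thing to an obstacle — is bookkeeping with the value $-\infty$: if some $\mathcal{E}_n(e^{ng_i})=0$ the corresponding $\tfrac1n\log$ term is $-\infty$, but Lemma~\ref{lem:largestTerm} is stated for $[0,\infty]$-valued sequences and the inequalities remain valid in the extended reals, so no special care beyond the stated conventions is required. Hence $\underline{\psi},\overline{\psi}$ satisfy Assumption~\ref{ass:jointMax}.
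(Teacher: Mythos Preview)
Your proof is correct and follows the same approach as the paper, namely the pointwise bound $e^{nf}\le\sum_i e^{ng_i}$ combined with subadditivity of $\mathcal{E}_n$ and the principle of the largest term (Lemma~\ref{lem:largestTerm}). In fact you are more thorough: the paper only writes out the argument for part~(2), leaving~(1) as obvious and~(3) as analogous, whereas you verify all three explicitly, correctly invoking~\eqref{eq:largestTermLow} for part~(3).
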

\begin{proof}
We prove that
\[
\overline{\psi}(f)\le \vee_{i=1}^N\overline{\psi}(g_i)
\]
 for $f\in {U}(E)$, and $g_1,g_2,\ldots,g_n\in {L}(E)$ with $f\le \vee_{i=1}^n g_i$. 
 Indeed, as a consequence of Lemma \ref{lem:largestTerm}, we have 
\begin{align*}
\overline{\psi}(f)&\le \underset{n\to\infty}\limsup \frac{1}{n}\log\mathcal{E}_n\left(e^{n (\vee_{i=1}^N g_i)}\right)\\
&\le \underset{n\to\infty}\limsup\frac{1}{n}\log \mathcal{E}_n\left( \sum_{i=1}^N e^{n g_i}\right) \\
&\le \underset{n\to\infty}\limsup\frac{1}{n}\log \sum_{i=1}^N\mathcal{E}_n\left(  e^{n g_i}\right) \\
&\le \vee_{i=1}^N \underset{n\to\infty}\limsup\frac{1}{n}\log \mathcal{E}_n\left(  e^{n g_i}\right) \\
&=\vee_{i=1}^N\overline{\psi}(g_i).
\end{align*}
\end{proof}
Adopting the usual terminology of standard large deviations theory \cite{dembo}, we introduce the following: 
\begin{itemize}
\item We say that the sequence $(\mathcal{E}_n)_{n\in\N}$  is \emph{exponentially tight} if for every $n\in\N$ there exists a compact set $K\subset E$ such that 
\[
\underset{n\to\infty}\limsup \frac{1}{n}\log\mu_n(K^c)<-n.
\] 
Here, $\mu_n\colon\mathcal{B}(E)\to[0,1]$ is the capacity associated with $\mathcal{E}_n$ which is given by $\mu_n(A)=\mathcal{E}_n(1_A)$.  
\item We say that $(\mathcal{E}_n)_{n\in\N}$ satisfies the \emph{large deviation principle} (LDP) with rate function $I\colon E\to[0,\infty]$ if  
\begin{align*}
-\underset{x\in {\rm int}(A)}\inf I(x)\le &\liminf_{n\to\infty}\frac{1}{n}\log\mu_n(A)\\ 
\le &\limsup_{n\to\infty}\frac{1}{n}\log\mu_n(A) \le -\underset{x\in {\rm cl}(A)}\inf I(x)
\end{align*}
 for all $A\in\mathcal{B}(E)$.
\item  We say that $(\mathcal{E}_n)_{n\in\N}$ satisfies the \emph{Laplace principle} (LP) with rate function $I\colon E\to[0,\infty]$ if 
\begin{equation}\label{eq:LP2}
\underset{n\to\infty}\lim\frac{1}{n}\log\mathcal{E}_n(e^{n f})
=\underset{x\in E}\sup\{f(x)-I(x)\}
\end{equation}
for all $f\in C_{\overline{\psi}}(E)$.  
\end{itemize}

Notice that the exponential tightness of $(\mathcal{E}_n)_{n\in\N}$ is equivalent to the tightness of $\overline{J}$ in the sense of Definition \ref{def:tightness}. 

We turn next to the main result of this section. 
 Let $\mathcal{H}$ be a distinguished nonempty set of continuous real-valued functions on $E$.    	
 Denote by $\mathscr{E}$ the set of all $\mathcal{H}$-exposed points of $\overline{\psi}_{\mathcal{H}}^\ast$ which admit  an exposing function $f\in \mathcal{H}$ such that
\[
\underline{\psi}(f)=\overline{\psi}(f)=\underset{n\to\infty}\lim\frac{1}{n}\log\mathcal{E}_n(e^{n f})\quad\mbox{and}\quad f\in B_{\overline{\psi}}(E).  
\]
In view of Lemma \ref{lem:joinMax}, we have the following result as a direct consequence of Theorem \ref{thm:main}. 
\begin{theorem}\label{thm:sublinear}
Suppose that the sequence $(\mathcal{E}_n)_{n\in\N}$ of sublinear expectations is exponentially tight.  
Then: 
\begin{itemize}
\item[(i)] For every closed set $C\subset E$, we have the upper bound
\[
\limsup_{n\to\infty}\frac{1}{n}\log\mu_n(C)\le -\underset{y\in C}\inf \overline{\psi}_{\mathcal{H}}^\ast(x).
\]
\item[(ii)] For every open set $O\subset E$, we have the lower bound
\[
 -\underset{y\in O\cap\mathscr{E}}\inf \overline{\psi}_{\mathcal{H}}^\ast(x)\le \liminf_{n\to\infty}\frac{1}{n}\log\mu_n(O).
\]
\item[(iii)]
If, moreover,  
\begin{equation}\label{eq:richExpPoints}
\underset{x\in O}\inf \overline{\psi}_{\mathcal{H}}^\ast(x)=\underset{x\in O\cap{\mathscr{E}}}\inf \overline{\psi}_{\mathcal{H}}^\ast(x)\quad\mbox{for all }O\subset E\mbox{ open, }
\end{equation}
then $(\mathcal{E}_n)_{n\in\N}$ satisfies the LDP and LP with rate function $\overline{\psi}_{\mathcal{H}}^\ast$. 
\end{itemize}
\end{theorem}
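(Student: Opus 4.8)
The plan is to deduce Theorem~\ref{thm:sublinear} from Theorem~\ref{thm:main} by setting up the translation dictionary between the two frameworks. First I would take $\underline{\phi}:=\underline{\psi}$ and $\overline{\phi}:=\overline{\psi}$. These are monetary risk measures (as already noted), by Lemma~\ref{lem:joinMax} they satisfy Assumption~\ref{ass:jointMax}, and the set $\mathcal{H}$ fixed in this section is a nonempty set of continuous real-valued functions on $E$, exactly as required in Theorem~\ref{thm:main}. So Theorem~\ref{thm:main} will be applicable as soon as its tightness hypothesis is checked in the present language.

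The next step is to identify the associated concentrations. Using the convention $-\infty\cdot 0=0$, for every $A\in\mathcal{B}(E)$ the function $-\infty 1_{A^c}$ equals $0$ on $A$ and $-\infty$ on $A^c$, hence $e^{\,n(-\infty 1_{A^c})}=1_A$ pointwise, and therefore $\mathcal{E}_n(e^{\,n(-\infty 1_{A^c})})=\mathcal{E}_n(1_A)=\mu_n(A)$. Taking liminf and limsup of $\tfrac1n\log$ of both sides gives
\[
\underline{J}_A=\underline{\psi}(-\infty 1_{A^c})=\liminf_{n\to\infty}\tfrac1n\log\mu_n(A),\qquad
\overline{J}_A=\overline{\psi}(-\infty 1_{A^c})=\limsup_{n\to\infty}\tfrac1n\log\mu_n(A).
\]
In particular, as recorded just before the statement, the exponential tightness of $(\mathcal{E}_n)_{n\in\N}$ is literally the tightness of $\overline{J}$ in the sense of Definition~\ref{def:tightness}, which is the standing hypothesis of Theorem~\ref{thm:main}.

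I would then check that the set $\mathscr{E}$ defined in this section coincides with the set $\mathscr{E}$ appearing in Theorem~\ref{thm:main}: the notion of an $\mathcal{H}$-exposed point of $\overline{\psi}_{\mathcal{H}}^\ast$ is the same in both places, and the selection condition agrees, since the requirement $\underline{\psi}(f)=\overline{\psi}(f)$ is precisely the assertion that $\lim_{n\to\infty}\tfrac1n\log\mathcal{E}_n(e^{nf})$ exists and equals this common value, i.e.\ it is condition~\eqref{eq:niceExp} for the pair $\underline{\psi},\overline{\psi}$ (together with $f\in B_{\overline{\psi}}(E)$). With these identifications, parts (i) and (ii) of Theorem~\ref{thm:sublinear} are Theorem~\ref{thm:main}(ii) and (iii) rewritten through the above formulas for $\overline{J}$ and $\underline{J}$; and part (iii) follows from Theorem~\ref{thm:main}(iv), observing that \eqref{eq:richExpPoints} is exactly \eqref{eq:condition}, that the LDP for the pair $\underline{\psi},\overline{\psi}$ unwinds to the LDP for $(\mathcal{E}_n)_{n\in\N}$ via the concentration identities, and that the Laplace principle for the pair — the equality $\underline{\psi}(f)=\overline{\psi}(f)=\sup_{x\in E}\{f(x)-\overline{\psi}_{\mathcal{H}}^\ast(x)\}$ on $C_{\overline{\psi}}(E)$ — says exactly that the limit in \eqref{eq:LP2} exists and equals the asserted value.

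Since everything reduces to Theorem~\ref{thm:main}, there is no genuine obstacle: the proof is entirely a matter of bookkeeping. The only points requiring a little care are the computation of the concentrations using the $-\infty\cdot 0=0$ convention, the observation that $C_{\overline{\phi}}(E)$ in the abstract Laplace principle becomes $C_{\overline{\psi}}(E)$ here, and the routine verification that the two descriptions of $\mathscr{E}$ coincide.
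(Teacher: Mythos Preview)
Your proposal is correct and matches the paper's approach exactly: the paper simply states that Theorem~\ref{thm:sublinear} is a direct consequence of Theorem~\ref{thm:main} in view of Lemma~\ref{lem:joinMax}, and your write-up spells out precisely the dictionary (concentrations via $e^{n(-\infty 1_{A^c})}=1_A$, tightness $\leftrightarrow$ exponential tightness, identical $\mathscr{E}$'s, LDP/LP translations) that makes this reduction work.
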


In the special case when $\mathcal{H}=E^\ast$ is the dual space of a topological vector space $E$ Theorem \ref{thm:sublinear} above amounts to the well-known G\"{a}rtner-Ellis theorem; see \cite[Theorem 4.5.20]{dembo}, and \cite[Theorem 3.1]{tan} for a version for sublinear expectations.  
We have that Theorem \ref{thm:sublinear} above is more flexible as it allows for arbitrary choices for $\mathcal{H}$.     
In the following example we show a case where the LDP and LP follow from Theorem \ref{thm:sublinear} above, but it is not covered by G\"{a}rtner-Ellis theorem.  
\begin{example}\label{ex:classical}
For each $n\in\N$, let $X_n$ be a real-valued random variable with (centered) Laplace distribution of parameter $1/n$, i.e., $X_n$ has density $h_n(x)=\tfrac{n}{2}e^{-n|x|}$ for all $x\in\mathbb{R}$.\footnote{Equivalently,  $X_n$ is the different of two independent random variables with exponential distribution of parameter $n$.} 
Consider the lower/upper asymptotic entropies 
\[
\underline{\psi}(f)=\underset{n\to\infty}\liminf \frac{1}{n}\log\mathbb{E}_{\mathbb{P}}[e^{n f(X_n)}],
\quad
\overline{\psi}(f)=\underset{n\to\infty}\limsup \frac{1}{n}\log\mathbb{E}_{\mathbb{P}}[e^{n f(X_n)}].
\]
For every $m\in\N$, define the compact set $K_m:=[-m,m]$. 
Then, $\overline{J}_{K_m^c}=-m$. Hence, $(X_n)_{n\in\N}$ is exponentially tight. 
Consider $\mathcal{H}_1=\R^\ast=\R$ as in G\"{a}rtner-Ellis theorem \cite[Theorem 4.5.20]{dembo}.  
In that case, for every $y\in\mathcal{H}_1=\R$,
\[
\underline{\psi}(y)=\overline{\psi}(y)=\underset{n\to\infty}\lim\frac{1}{n}\log \int_{-\infty}^\infty\tfrac{n}{2} e^{n y x - n |x|} d x = \begin{cases}0, & \mbox{ if }|y|<1,\\ \infty, & \mbox{ if }|y|\ge 1.\end{cases} 
\]
Hence, 
\[
\overline{\psi}^\ast_{\mathcal{H}_1}(x)=\underset{y\in \R}\sup\{ y x - \overline{\psi}(y)\}=|x|.
\]
The only exposed point of $\overline{\psi}^\ast_{\mathcal{H}_1}$ is $0$. 
Then, for every open set $O\subset\R$ that does not contain the origin, the classical 
G\"{a}rtner-Ellis theorem only gives a trivial lower bound $-\inf_{y\in O\cap \{0\}}\overline{\psi}^\ast_{\mathcal{H}_1}(y)=-\infty$, and the condition~\eqref{eq:richExpPoints} is not satisfied. 

Now, consider the set of continuous functions 
\[
\mathcal{H}_2:=\left\{ f_a  \colon a\ge 0\right\},
\] 
where
$$f_a(x):=|a| - 2|x-a|.$$
 Direct verification yields $\underline{\psi}(f_a)=\overline{\psi}(f_a)=0$ for every $a\in\R$.
Consequently, we have
\[
\overline{\psi}^\ast_{\mathcal{H}_2}(x)=\underset{a\in\mathbb{R}}\sup f_a(x)=|x|.
\]
This shows that $\mathcal{H}_2$ is an exposing family as in Examples \ref{examples:linear}(3).  
Therefore, the set of all $\mathcal{H}_2$-exposed points is $\mathbb{R}$ and $\overline{\psi}^\ast_{\mathcal{H}_2}$ trivially verifies the condition~\eqref{eq:richExpPoints}.  
\begin{figure}[htb]
\centering
\includegraphics[width=8cm]{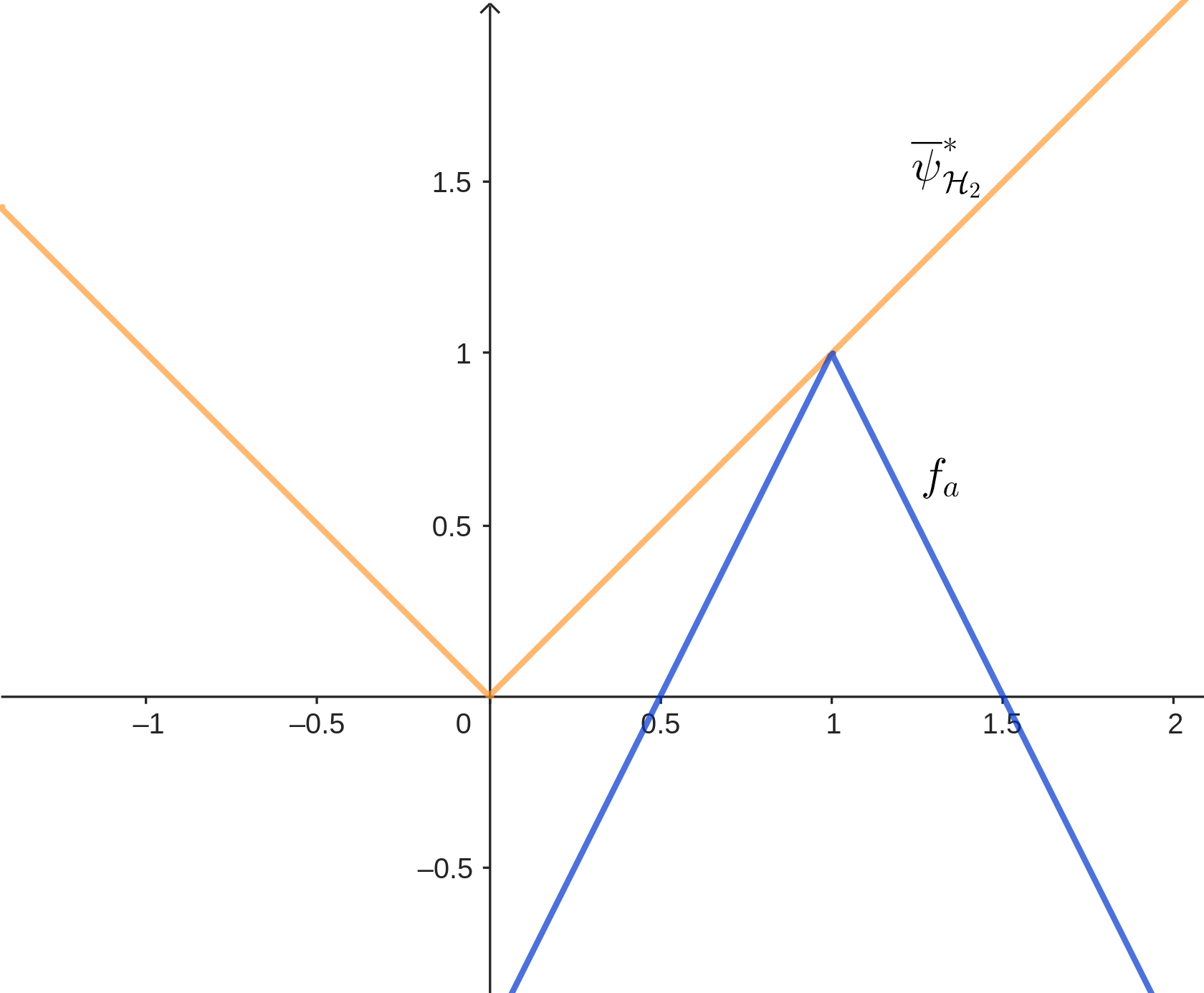}
\caption{Every point $a$ is exposed with exposing function $f_a$. In this case, we can see the exposing function for $a=1$.}
\end{figure}

Then, (iii) in Theorem \ref{thm:sublinear} gives the LDP
\[
-\underset{y\in {\rm int}(A)}\inf |y|\le  \underset{n\to\infty}\liminf \tfrac{1}{n}\log\mathbb{P}(X_n\in A)\le
\underset{n\to\infty}\limsup \tfrac{1}{n}\log\mathbb{P}(X_n\in A)\le -\underset{y\in {\rm cl}(A)}\sup |y|,  
\]
where the lower bound is not trivial whenever ${\rm int}(A)\neq \emptyset$.  
Moreover, we obtain the LP
\[
\underline{\psi}(f)=\overline{\psi}(f)=\underset{n\to\infty}\lim\frac{1}{n}\log\int_{-\infty}^\infty \tfrac{n}{2}e^{n (f(x)-|x|)}dx=
\underset{x\in\R}\sup\{f(x)-|x|\},
\]
for all $f\in C_{\overline{\psi}}(E)$. 
The present example illustrates that G\"{a}rtner-Ellis theorem does not capture the lower bound in the LDP when the rate function has large parts that are not exposed by hyperplanes. 
In contrast, by considering other types of exposing functions, for instance, inverted v-shaped functions as in this case, we can produce richer classes of exposing points and prove a LDP by means of  Theorem~\ref{thm:sublinear}.  

Although G\"{a}rtner-Ellis theorem does not capture the lower bound in the LDP, we may try other known methods to prove it. 
Next, we briefly discuss some of these methods.  
We note that, in this simple one-dimensional case, the limit representation \cite[Theorem 4.1.18]{dembo} yields the rate function $I(x)=|x|$, but we would still need to prove the existence of a LDP to apply this result.  
Since $(X_n)_{n\in\mathbb{N}}$ is exponentially tight, we may apply Bryc's theorem \cite[Theorem 4.4.2]{dembo}.  
However, we need to verify that $\overline{\psi}(f)=\underline{\psi}(f)$ for all $f\in C_b(\mathbb{R})$ to derive the lower LDP bound, which is not immediate.   
Alternatively, as proven in~\cite{comman2}, one can replace $C_b(\mathbb{R})$ in Bryc's theorem by an algebra $\mathcal{A}$ of continuous functions separating the points of $\mathbb{R}$, or any well-separating class $\mathcal{A}$  (i.e.~$\mathcal{A}$ contains the constant functions, is closed under finite infima, and separates points of $\mathbb{R}$).  
However, this methodology leads again to rather intricate classes of functions where we need to verify $\overline{\psi}(f)=\underline{\psi}(f)$. 
In contrast, our method leads to a family of inverted v-shaped functions for which is directly checked that $\overline{\psi}(f)=\underline{\psi}(f)=0$.

\end{example}

\begin{remark}   
The main result of \cite{tan} is a version of G\"{a}rner Ellis theorem for sequences of sublinear expectations on a finite dimensional setting, which is proven by adapting the proof of the standard case. 
This result is a particular instance of Theorem \ref{thm:sublinear} for the special choices  $E=\R^d$ and $\mathcal{H}=(\R^d)^\ast=\R^d$, taking into account (2) in Examples~\ref{examples:linear}. 
As illustrated in Example \ref{ex:classical}, this setting does not cover all $\R^d$ cases in which a LDP exists.    
In addition, the authors of~\cite{tan} consider sublinear expectations of the particular form $\mathcal{E}_n(f)=\sup_{Q\in\mathcal{P}}\mathbb{E}_Q[f(X_n)]$ for some set $\mathcal{P}$ of probabilities measures and a sequence of random variables $(X_n)_{n\in\N}$. 
In particular, this implies that the sublinear $\mathcal{E}_n$ is continuous from below due to the monotone convergence theorem. 
Such a continuity condition is not needed in the present approach. 
Also, among other restrictions it is assumed in \cite{tan} that the limit $\lim_{n\to\infty}\frac{1}{n}\log \mathcal{E}_n(exp(n f))$ exists for all $f\in E^\ast$ (see \cite[Assumption 3.1]{tan}), which was not needed here.      
Furthermore, in \cite{tan} it is not derived the Laplace principle~\eqref{eq:LP2}.
\end{remark}

\begin{appendix}

\section{Pairs of monetary risk measures}
As a consequence of the duality bounds provide in \cite{kupper2}  (see Theorem \ref{thm:dualityBounds}) and in line with \cite[Proposition 5.2]{kupper} we have the following.
\begin{proposition}\label{prop:LDPimpliesLP}
Suppose that $\underline{\phi},\overline{\phi}$ is a pair of monetary risk measures such that $\overline{\phi}$ is weakly maxitive and $\underline{\phi}(f)\le \overline{\phi}(f)$ for all $f\in B(E)$. 
If the pair $\underline{\phi},\overline{\phi}$ satisfies the LDP with rate function $I(\cdot)$, then  the pair $\underline{\phi},\overline{\phi}$ satisfies the LP with rate function $I(\cdot)$.
\end{proposition}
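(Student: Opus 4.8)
The plan is to derive the Laplace principle from the large deviation principle by translating the latter into the functional language of the duality bounds (Theorem~\ref{thm:dualityBounds}) and then invoking the convex integral representation of $\overline{\phi}$ (Theorem~\ref{thm:repII}(2)), supplemented by one elementary monotonicity estimate for $\underline{\phi}$. Throughout, $\underline{J}_A=\underline{\phi}(-\infty 1_{A^c})$ and $\overline{J}_A=\overline{\phi}(-\infty 1_{A^c})$ are the associated concentrations.

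First I would specialize the LDP chain of inequalities. Applying it to an open set $A=O$ (so that ${\rm int}(A)=O$) yields $-\inf_{x\in O}I(x)\le \underline{J}_O$ for every open $O\subset E$, i.e.\ $\underline{J}$ satisfies~\eqref{eq:LowerLDP} with the function $I$; applying it to a closed set $A=C$ (so that ${\rm cl}(A)=C$) yields $\overline{J}_C\le -\inf_{x\in C}I(x)$ for every closed $C\subset E$, i.e.\ $\overline{J}$ satisfies~\eqref{eq:UpperLDP}. By Theorem~\ref{thm:dualityBounds} these are equivalent, respectively, to $\phi_{\underline{J}}(f)\ge \sup_{x\in E}\{f(x)-I(x)\}$ for all $f\in L(E)$ and to $\phi_{\overline{J}}(f)\le \sup_{x\in E}\{f(x)-I(x)\}$ for all $f\in U(E)$.

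Now fix $f\in C_{\overline{\phi}}(E)$; since $C_{\overline{\phi}}(E)\subset C(E)=U(E)\cap L(E)$, the function $f$ is simultaneously upper and lower semicontinuous, so both duality bounds apply to it. For the upper inequality, the hypothesis that $\overline{\phi}$ is weakly maxitive lets me invoke Theorem~\ref{thm:repII}(2), so $\overline{\phi}(f)=\phi_{\overline{J}}(f)\le \sup_{x\in E}\{f(x)-I(x)\}$. For the lower inequality I cannot represent $\underline{\phi}$, as it is not assumed weakly maxitive, so instead I would record the general fact that $\underline{\phi}(g)\ge\phi_{\underline{J}}(g)$ for every $g\in B(E)$: for each $c\in\R$ one has the pointwise bound $g\ge c\,1_{\{g>c\}}-\infty 1_{\{g\le c\}}=c+\big({-\infty}\,1_{\{g\le c\}}\big)$, whence monotonicity and translation invariance give $\underline{\phi}(g)\ge c+\underline{J}_{\{g>c\}}$, and taking the supremum over $c$ yields the claim. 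Applied to $g=f$, this together with the lower duality bound gives $\underline{\phi}(f)\ge\phi_{\underline{J}}(f)\ge \sup_{x\in E}\{f(x)-I(x)\}$.

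Chaining the two estimates with the standing inequality $\underline{\phi}(f)\le\overline{\phi}(f)$ then gives
\[
\sup_{x\in E}\{f(x)-I(x)\}\le \underline{\phi}(f)\le \overline{\phi}(f)\le \sup_{x\in E}\{f(x)-I(x)\},
\]
so all three quantities coincide, and since $f\in C_{\overline{\phi}}(E)$ was arbitrary this is exactly the LP with rate function $I(\cdot)$. There is no substantial difficulty in this argument; the only points that need care are the bookkeeping that converts the set-function form of the LDP into the functional form of the duality bounds, and the observation that the representation $\overline{\phi}=\phi_{\overline{J}}$ is available on the full space $C_{\overline{\phi}}(E)$ — rather than merely on $\bar{C}(E)$ — precisely because $\overline{\phi}$ is weakly maxitive (this is Theorem~\ref{thm:repII}(2)), which is what allows the possibly unbounded functions in $C_{\overline{\phi}}(E)$ to be handled.
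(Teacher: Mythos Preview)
Your proof is correct and follows essentially the same approach as the paper: both use Theorem~\ref{thm:repII}(2) to identify $\overline{\phi}(f)=\phi_{\overline{J}}(f)$ and then the upper duality bound, and both obtain the lower estimate for $\underline{\phi}$ by a monotonicity/translation argument. The only cosmetic difference is that you package the lower bound as the general inequality $\underline{\phi}\ge\phi_{\underline{J}}$ followed by an appeal to Theorem~\ref{thm:dualityBounds}, whereas the paper carries out the corresponding neighborhood argument by hand; the content is the same.
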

 \begin{proof}
 Suppose that the pair $\underline{\phi},\overline{\phi}$ satisfies the LDP with rate function $I(\cdot)$, and fix $f\in C_{\overline{\phi}}(E)$. Since $\overline{\phi}$ is weakly maxitive, due to Theorem \ref{thm:repII} we have that $\phi_{\overline{J}}(f)=\overline{\phi}(f)$.  
Then, by Theorem~\ref{thm:dualityBounds}, we have
\begin{equation}\label{eq:kkLP}
\overline{\phi}(f)=\phi_{\overline{J}}(f)=\underset{x\in E}\sup\{f(x)-I(x)\}.
\end{equation} 
Now, given $x\in E$ and $\delta>0$, since $f$ is upper semicontinuous, there exists $U\in\mathcal{U}_x$ such that $\inf_{y\in U} f(y)\ge f(x)-\delta$. 
Then, by monotonicity and translation invariance
\begin{align*}
\underline{\phi}(f)&\ge \underline{\phi}(f1_U -\infty 1_{U^c})\ge f(x)-\delta + \underline{J}_U\\
&\ge f(x)-\delta -\underset{y\in U}\inf I(y)\ge f(x)-\delta - I(x).
\end{align*}
Letting $\delta \downarrow 0$ and taking the supremum over all $x\in E$ also yields
\[
\underline{\phi}(f)\ge \underset{x\in E}\sup\{f(x)-I(x)\}.
\]
This along with \eqref{eq:kkLP} shows that the pair $\underline{\phi},\overline{\phi}$ satisfies the LP with rate function $I(\cdot)$. 
 \end{proof}

\end{appendix}


\begin{thebibliography}{9}
\bibitem{delbaen} 
P.~Artzner, F.~Delbaen, J.-M.~Eber, D.~Heath,
Coherent measures of risk,
\emph{Math. Finance} 9(3) (1999) 203--228.


\bibitem{backhoff}
J.~Backhoff-Veraguas, D.~Lacker, L.~Tangpi, 
Nonexponential Sanov and Schilder theorems on Wiener space: BSDEs, Schr\"{o}dinger problems and control. \emph{Ann. Appl. Probab.} 30 (2020) 1321--1367.

\bibitem{bell}
H.~Bell, W.~Bryc,
Variational representations of Varadhan functionals. 
\emph{Proc. Amer. Math. Soc.} 129 (2001) 2119--2125.

\bibitem{bryc}
W.~Bryc,
Large deviations by the asymptotic value method. 
In \emph{Diffusion Processes and Related Problems in Analysis, vol.~1} 22 (1990) 447--472.

\bibitem{cattaneo} 
M. Cattaneo, 
On maxitive integration, 
\emph{Fuzzy Sets Syst.} 304 (2016) 65--81. 

\bibitem{comman}
H.~Comman, 
Criteria for large deviations,
\emph{Transactions of the American Mathematical Society,} 
355 (2003) 2905--2923.

\bibitem{comman2}
H.~Comman, 
Stone-Weierstrass type theorems for large deviations,
\emph{Electronic Communications in Probability,} 
 13 (2008) 225-240.


\bibitem{cramer} 
H. Cram\'{e}r,  
Sur un nouveau th\'{e}oreme-limite de la th\'{e}orie des probabilit\'{e}s, 
\emph{Actual. Sci. Ind.} 736 (1938) 5--23.

\bibitem{cramer2} 
H. Cram\'{e}r,  
Historical review of filip lundberg's works on risk theory,  
\emph{Scandinavian Actuarial Journal}, sup.~(1969) 6--12.

\bibitem{chen0}
P.-N. Chen, 
Generalization of Gartner-Ellis theorem, 
\emph{IEEE Trans. Inform. Theory}  
46(7) (2000) 2752--2760.


\bibitem{chen} 
Z.~Chen, X.~Feng, Large deviation for negatively dependent random variables under sublinear expectation,
\emph{Commun. Stat.} 45(2) (2015) 400--412.


\bibitem{dembo} 
A. Dembo, O. Zeitouini, 
\emph{Large deviations techniques and applications,}  
Springer-Verlag, Berlin, 2010. 

\bibitem{donsker1}
M.~D. Donsker,  S.~R.~S. Varadhan,   
Asymptotic evaluation of certain Markov process expectations for large time I. 
\emph{Comm. Pure Appl. Math.},
{28}  (1975) 1--47. 



\bibitem{eckstein}
S.~Eckstein, 
Extended Laplace principle for empirical measures of a Markov chain. 
\emph{Adv. in Appl.
Probab.} 51 (2019) 136–167.
 
\bibitem{follmer}
H.~F\"{o}llmer, A.~Schied, 
\emph{Stochastic Finance. An Introduction in Discrete Time,} 
extended edition, Walter de Gruyter \& Co., Berlin, 2011. 
 
\bibitem{follmerII}
H.~F\"{o}llmer, T.~Knispel, 
Entropic risk measures: Coherence vs. convexity, model ambiguity, and
robust large deviations. 
\emph{Stoch. Dyn.} 11 (2011) 333--351. 
 

 
\bibitem{hollander}
F. den Hollander, F,
\emph{Large deviations,} 
Amer. Math. Soc, 2008.

\bibitem{hu}
F. Hu, 
On Cram\'{e}r's theorem for capacities, 
\emph{C. R. Math.} 
348 (17--18) (2010) 1009--1013.

\bibitem{huber}
P.~J. Huber, 
\emph{Robust Statistics},
John Wiley \& Sons, 1981.

\bibitem{kupper}
 M. Kupper, J.~M. Zapata,  
 Large deviations built on max-stability,  
\emph{Bernoulli} 27(2) (2021) 1001--1027. 

\bibitem{kupper2}
 M. Kupper, J.~M. Zapata,  
 Weakly maxitive set functions and their possibility distributions,  
 arXiv preprint arXiv:2103.15102v2 (2022). To appear in \emph{Fuzzy Sets Syst.} 

\bibitem{lacker}
D.~Lacker, D,
 A non-exponential extension of Sanov's theorem via convex duality, 
 \emph{Adv. in Appl. Probab.}
52 (2020) 61--101.

\bibitem{maslov}
V. N. Kolokoltsov, V. P. Maslov,
\emph{Idempotent Analysis and Its Applications,} 
Springer, 1997.


\bibitem{peng}
S. Peng, 
\emph{Nonlinear Expectations and Stochastic Calculus Under Uncertainty,}  
Springer, Berlin, Heidelberg, 2019.

\bibitem{puhalskii}
A. Puhalskii, 
\emph{Large Deviations and Idempotent Probability,}  
Chapman \& Hall/CRC Monographs
and Surveys in Pure and Applied Mathematics 119. Boca Raton, FL: CRC Press/CRC., 2001.


\bibitem{rassoul}
F.~Rassoul-Agha, T.~Sepp\"{a}l\"{a}inen,  
\emph{A course on large deviations with an introduction to Gibbs measures, } 
(Vol. 162). American Mathematical Soc., 2015.

\bibitem{shilkret}
N.~Shilkret, 
Maxitive measure and integration, 
\emph{Indag. Math.} 33 (1971) 109--116.

\bibitem{tan}
Y. Tan, G. Zong, 
Large deviation principle for random variables under sublinear expectations on $\mathbb{R}^d$, 
\emph{J. Math. Anal. Appl.} 488(2) (2020) 124110.

\bibitem{varadhan1966asymptotic}
S.~R.~S. Varadhan,  
Asymptotic probabilities and differential equations,  
\emph{Comm. Pure Appl. Math.}, 
 19 (1966) 261--286.  
\MR{203230}


\bibitem{walley}
P. Walley, 
\emph{Statistical Reasoning with Imprecise Probabilities, Monographs on Statistics and Applied Probability,} 
 vol. 42, Chapman and Hall, Ltd., London,
 1991.

\bibitem{yan}
J.~Yan, 
{Deviations and asymptotic behavior of convex and coherent entropic risk measures for compound Poisson process influenced by jump times,} 
\emph{{Statist. Probab. Lett.}} 125 (2017) 71--79.


\end{thebibliography}
\end{document}